\def\BBox{\kern  -0.2cm\hbox{\vrule width 0.2cm height 0.2cm}}
\title{A note on extremal results on directed acyclic graphs}
\author{A. Mart\'{i}nez-P\'{e}rez\footnote{Departamento de An\'alisis Econ\'omico y Finanzas, Universidad de Castilla-La Mancha,
Avda. Real F\'abrica de Seda, s/n. 45600 Talavera de la Reina, Toledo, Spain. alvaro.martinezperez@uclm.es}, L. Montejano\footnote{Instituto de Matem\'aticas, Universidad Nacional Aut\'onoma de M\'exico, \'Area de la Investigaci\'on Cient\'ifica, Circuito Exterior, Cu. Coyoacan 04510, M\'exico D.F., Mexico. luis@matem.unam.mx} and D. Oliveros \footnote{Instituto de Matem\'aticas, Universidad Nacional Aut\'onoma de M\'exico, \'Area de la Investigaci\'on Cient\'ifica, Circuito Exterior, Cu. Coyoacan 04510, M\'exico D.F., Mexico. dolivero@matem.unam.mx}}
\newtheorem{definicion}{Definition}[section]
\newtheorem{prop}[definicion]{Proposition}
\newtheorem{lema}[definicion]{Lemma}
\newtheorem{obs}[definicion]{Remark}
\newtheorem{teorema}[definicion]{Theorem}
\newtheorem{cor}[definicion]{Corollary}
\newcommand{\co}{\ensuremath{\colon}} 
\newcommand{\bn}{\ensuremath{\mathbb{N}}} 
\newcommand{\br}{\ensuremath{\mathbb{R}}} 
\begin{document}


\maketitle

\begin{abstract}
The family of Directed Acyclic Graphs as well as some related graphs are analyzed with respect to extremal behavior in relation with the family of intersection graphs 
for families of boxes with transverse intersection.
\end{abstract}

\section{Introduction}

One of the fundamental results in graph theory which initiated extremal graph theory is the Theorem of Tur\'an (1941) which states that a graph with 
$n$ vertices that has more than $T(n,k)$ edges, will always contain a complete subgraph of size $k+1$.  
The {\it Tur\'an number}, $T(n,k)$ is defined as the maximum number of edges in a graph with $n$ vertices without a clique of size $k+1$.
It is known that $T(n,k)\leq (1-\frac{1}{k})\frac{n^2}{2}$, and
equality holds if $k$ divides $n$. In fact, $\lim_{n\to\infty}\frac{T(n,m)}{\frac{n^2}{2}}=1-\frac{1}{m}. $  See \cite{AZ}.

Tur\'an numbers for several families graphs have been studied in the context of extremal graph  theory, see for example \cite{B} and  \cite{D}.  In (\cite{MMO}, \cite{BFMMOP}) the authors analyze, among other things, 
the intersection graphs of boxes in $\br^d$ proving that, if $\mathcal{T}(n,k,d)$ denotes the maximal number of intersection pairs in a family $\mathcal{F}$ of $n$ 
boxes in $\br^d$ with the property that no $k+1$ boxes in $\mathcal{F}$ have a point in common (with $n\geq k\geq d\geq 1)$, then  
$\mathcal{T}(n,k,d)=\mathcal{T}(n-k+d,d)+\mathcal{T}(n,k-d+1,1)$, being $\mathcal{T}(n,k,1)={n\choose 2}-{n-k+1\choose 2}$ the precise bound in dimension 
$1$ for the family of interval graphs. 

Tur\'an numbers have played and important role for several variants of the Tur\'an Theorem and its relation with the fractional Helly Theorem (see \cite{K}, \cite{KL1979}).

The purpose of this paper is to study some extremal results and their connection with the Tur\'an numbers for the family of  directed acyclic graphs.
This is related with the extremal behavior of the family of intersection graphs for a collection of boxes in $br^2$ with transverse intersection.

The first result, Proposition \ref{Prop: Turan}, states that in a directed acyclic graph with $n$ vertices, 
if the longest path has length $\ell$, then the maximal number of edges is the Tur\'an number $T(n,\ell+1)$.

Theorem \ref{Theorem: bound} and its corollaries state that given a  Directed Aciclic Graph  $\vec{G}$ with $n$ vertices such that the longest path has 
length $\ell$ then, if $\vec{G}$ is either reduced, strongly reduced or extremely reduced, $\vec{G}$ has at most $\mathcal{T}(n-\ell+1,2)+T(n,\ell,1)$ edges, where again $\mathcal{T}(n,\ell,1)$ denotes the maximal number of intersecting pairs in a family $\mathcal{F}$ of $n$ intervals in 
$\mathbb{R}$ with the property that no $\ell+1$ intervals in $\mathcal{F}$ have a point in common. 

In fact, this bound is best possible.
The bound is reached by the intersection graph of a collection of boxes in $\br^2$ with transverse intersection. This graph is reduced, strongly reduced and extremely reduced.

\section{Directed acyclic graphs}

By a \textit{directed acyclic graph}, DAG, we mean a simple directed graph without directed cycles. A DAG, $\vec{G}=(\mathcal{V},\vec{\mathcal{E}})$, with vertex set $\mathcal{V}$ and directed edge set $\vec{\mathcal{E}}$ is \textit{transitive} if for every $x,y,z\in \mathcal{V}$, if $\{x,y\},\{y,z\}\in \vec{\mathcal{E}}$ then $\{x,z\}\in \vec{\mathcal{E}}$. 

\begin{definicion} A \textit{topological order} of a directed graph $\vec{G}$ is an ordering of its vertices $\{v_1,v_2, ... ,v_n\}$ so that for every edge $\{v_i, v_j\}$ then $i < j$.
\end{definicion}

The following proposition is a well known result:

\begin{prop} A directed graph $\vec{G}$ is a DAG if and only if $\vec{G}$ has a topological order.
\end{prop}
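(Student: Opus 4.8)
The plan is to prove both directions of this classical equivalence. The statement is: a directed graph $\vec{G}$ is a DAG (has no directed cycles) if and only if it admits a topological order.

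For the easier direction, suppose $\vec{G}$ has a topological order $\{v_1,\dots,v_n\}$, so every edge $\{v_i,v_j\}$ satisfies $i<j$. I would argue by contradiction: if there were a directed cycle $v_{i_1}\to v_{i_2}\to\cdots\to v_{i_k}\to v_{i_1}$, then the edge condition forces $i_1<i_2<\cdots<i_k<i_1$, an immediate contradiction since the indices would have to strictly increase all the way around and return to the start. Hence no directed cycle exists and $\vec{G}$ is a DAG. This direction is short and purely combinatorial.

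For the converse, suppose $\vec{G}$ is acyclic; I want to construct a topological order. The cleanest approach is induction on the number of vertices $n$. The key structural fact I need is that every finite DAG has a \emph{source}, a vertex with in-degree zero (equivalently, a \emph{sink} with out-degree zero). To see this, start at any vertex and repeatedly follow an incoming edge backwards; since there are finitely many vertices, if this process never terminated we would eventually revisit a vertex, producing a directed cycle and contradicting acyclicity. So some vertex $v_1$ has no incoming edges. I then set $v_1$ first in the order, delete it from $\vec{G}$, observe that the remaining graph on $n-1$ vertices is still a DAG (removing a vertex cannot create a cycle), apply the induction hypothesis to obtain a topological order $\{v_2,\dots,v_n\}$ of the rest, and prepend $v_1$. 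Since $v_1$ has no incoming edges, every edge incident to $v_1$ points outward to some $v_j$ with $j>1$, and all other edges respect the ordering by the inductive hypothesis; hence the full ordering is topological.

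The main obstacle, such as it is, lies in establishing the existence of a source vertex rigorously, since this is where acyclicity is genuinely used; the rest is bookkeeping. The backward-following argument must be stated carefully to ensure that a non-terminating walk in a finite graph produces a repeated vertex and thus a genuine directed cycle. I expect the induction base case ($n=1$, a single vertex, trivially ordered) to be immediate, and the inductive step to go through cleanly once the source lemma is in hand.
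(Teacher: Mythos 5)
Your proof is correct. Note that the paper itself offers no proof of this proposition at all --- it simply states it as ``a well known result'' --- so there is nothing to compare against; your argument (the easy contradiction for one direction, and source-extraction plus induction on the number of vertices for the other, with the source's existence established by walking incoming edges backwards until finiteness and acyclicity force termination) is the standard textbook proof, and it is complete and sound for the finite graphs the paper works with.
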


Given any set $X$, by $|X|$ we denote the cardinal of $X$.

The \textit{indegree}, $deg^-(v)$, of a vertex $v$ is the number of directed edges $\{x,v\}$ with $x \in V$. The \textit{outdegree}, $deg^+(v)$, of a vertex $v$ is the number of directed edges $\{v,x\}$ with $x \in \mathcal{V}$. Notice that each direct edge $\{v,w\}$ adds one outdegree to the vertex $v$ and one indegree to the vertex $w$. Therefore, $\sum_{v\in \mathcal{V}} deg^+(v)=\sum_{v\in \mathcal{V}} deg^-(v)=|(\vec{\mathcal{E}})|$.

The degree of a vertex is $deg(v)=deg^-(v)+deg^+(v)$.

A vertex $v$ such that  $deg^-(v)=0$ is called \textit{source}. A vertex $v$ such that  $deg^+(v)=0$ is called \textit{sink}. It is well known, that every DAG $\vec{G}$ has at least one source and one sink.

Given a DAG, $\vec{G}=(\mathcal{V},\vec{\mathcal{E}})$, a directed path $\vec{\gamma}$ in $G$ is a sequence of vertices $\{v_0,...,v_n\}$ such that $\{v_{i-1},v_i\}\in \vec{\mathcal{E}}$ for every $1\leq i\leq n$. Here, $\vec{\gamma}$ has \emph{length} $n$, and \emph{endpoint} $v_n$.

Given a DAG, $\vec{G}=(\mathcal{V},\vec{\mathcal{E}})$, let $\Gamma\co \mathcal{V}\to \bn$ be such that $\Gamma(v)=k$ if there exists a directed path $\vec{\gamma}$ in $G$ of length $k$ with endpoint $v$ and there is no directed path $\vec{\gamma'}$ with endpoint $v$ and length greater than $k$.

Given a DAG, $\vec{G}=(\mathcal{V},\vec{\mathcal{E}})$ suppose that $\ell=\max\{k \, | \, \Gamma(v)=k\ {\text{for every }} v\in \mathcal{V}\}$. Notice that, since $\vec{G}$ has no directed cycle, $\ell \leq |\mathcal{V}|$. Then, let us define a partition $P_\Gamma=\{V_0,...,V_\ell\}$ of $\mathcal{V}$ such that $V_i:=\{v\in \mathcal{V} \, | \, \Gamma(v)=i\}$ for every $0\leq i \leq \ell$.

Notice that $V_0$ is exactly the set of sources in $\vec{G}$ and $V_\ell$ is contained in the set of sinks in $G$.

\begin{prop} $V_i$ is nonempty for every $0\leq i \leq \ell$.
\end{prop}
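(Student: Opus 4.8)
The plan is to exploit a single longest directed path in $\vec{G}$ and to show that its initial segments realize every value $0,1,\dots,\ell$ of the function $\Gamma$. First I would dispose of the two extreme indices. The block $V_\ell$ is nonempty by the very definition of $\ell$, since $\ell$ is attained by at least one vertex. The block $V_0$ is nonempty because, as already noted, $V_0$ is precisely the set of sources of $\vec{G}$, and every DAG has at least one source.

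For the intermediate indices I would pick a vertex $v$ with $\Gamma(v)=\ell$ together with a directed path $\vec{\gamma}=\{v_0,v_1,\dots,v_\ell\}$ of length $\ell$ whose endpoint is $v=v_\ell$, a path that witnesses $\Gamma(v)=\ell$. The core claim is that $\Gamma(v_i)=i$ for every $0\le i\le \ell$. Granting this, each $v_i$ lies in $V_i$, so every block of the partition $P_\Gamma$ is nonempty and the proposition follows.

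To establish the claim, fix $i$ with $0\le i \le \ell$. The initial segment $\{v_0,\dots,v_i\}$ of $\vec{\gamma}$ is itself a directed path of length $i$ with endpoint $v_i$, so $\Gamma(v_i)\ge i$. For the reverse inequality I would argue by contradiction: suppose $\Gamma(v_i)=m>i$, witnessed by a directed path $\{w_0,\dots,w_m=v_i\}$. Concatenating this path with the tail $v_i,v_{i+1},\dots,v_\ell$ of $\vec{\gamma}$ produces a directed walk from $w_0$ to $v_\ell$ of length $m+(\ell-i)>\ell$. Since $\vec{G}$ is acyclic, this walk cannot repeat a vertex (a repetition would enclose a directed cycle, which one sees at once from a topological order), so it is in fact a directed path of length greater than $\ell$ with endpoint $v_\ell$. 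This contradicts $\Gamma(v_\ell)=\ell$, forcing $\Gamma(v_i)=i$.

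The only delicate point, and the one I would state most carefully, is the concatenation step: one must certify that joining two directed paths along their common vertex $v_i$ yields a genuine directed path and not merely a walk. This is exactly where acyclicity enters, and it is what upgrades the easy inequality $\Gamma(v_i)\ge i$ to the equality $\Gamma(v_i)=i$. Everything else is routine bookkeeping with the definition of $\Gamma$.
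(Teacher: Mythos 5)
Your proof is correct and follows essentially the same route as the paper: take a directed path $\{v_0,\dots,v_\ell\}$ of maximal length and show, via the initial-segment bound $\Gamma(v_i)\geq i$ and the concatenation-contradiction argument, that $\Gamma(v_i)=i$, so $v_i\in V_i$ for every $i$. The only differences are cosmetic: you handle $V_0$ and $V_\ell$ separately (which is not needed once the core claim is proved) and you make explicit the point, left implicit in the paper, that acyclicity prevents the concatenated walk from repeating a vertex.
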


\begin{proof} Let $\{v_0,...,v_\ell\}$ be a directed path of maximal length in $\vec{G}$. Clearly, for every $0\leq i \leq \ell$, $v_i\notin V_j$ if $j<i$. Suppose $v_i\in V_j$ with $i<j\leq \ell$. Then, there is a directed path $\{v'_0,...,v'_j=v_i\}$ with $j>i$ and $\{v'_0,...,v'_j,v_{i+1},...,v_\ell\}$ is a directed path with length $j+l-i>\ell$ which contradicts the hypothesis.
\end{proof}

\begin{prop}\label{Prop: indep} The induced subgraph with vertices $V_i$, $G[V_i]$, is independent (has no edges) for every $i$.
\end{prop}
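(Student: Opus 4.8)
The plan is to argue by contradiction, exploiting the extremal (maximal-length) clause that is built into the definition of $\Gamma$. Suppose that for some index $i$ the induced subgraph $G[V_i]$ has an edge. Since $G[V_i]$ inherits the orientation of $\vec{G}$, this edge is a directed edge, say $\{a,b\}\in\vec{\mathcal{E}}$ oriented from $a$ to $b$, with both endpoints lying in $V_i$; that is, $\Gamma(a)=\Gamma(b)=i$. The goal is then to manufacture a directed path ending at $b$ that is strictly longer than $i$, contradicting the definition of $\Gamma(b)$.

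The central step I would carry out is to unpack $\Gamma(a)=i$: by definition there is a directed path $\vec{\gamma}=\{w_0,\dots,w_i=a\}$ of length $i$ with endpoint $a$. Appending the edge $\{a,b\}$ yields the sequence $\{w_0,\dots,w_i,b\}$, in which every consecutive pair is an edge of $\vec{\mathcal{E}}$; hence it is a directed path of length $i+1$ with endpoint $b$. This immediately contradicts the maximality asserted in $\Gamma(b)=i$, namely that no directed path with endpoint $b$ has length greater than $i$. Since the index $i$ was arbitrary, this shows $G[V_i]$ has no edges for every $i$.

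The argument is short, so the only point that I expect to require any care — and the place where acyclicity is implicitly used — is the legitimacy of the extended sequence as a witness for $\Gamma(b)$. One should note that $\Gamma$ is well defined precisely because $\vec{G}$ has no directed cycle, so all directed paths ending at a given vertex have bounded length; and the appended vertex $b$ cannot already appear among $w_0,\dots,w_i$, since otherwise concatenation with the edge $a\to b$ would close a directed cycle, impossible in a DAG. With that observation the extension is a genuine directed path and the contradiction is complete.
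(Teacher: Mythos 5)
Your proof is correct and is essentially identical to the paper's: both take a directed path of length $i$ ending at one endpoint of the supposed edge, append the other endpoint to obtain a directed path of length $i+1$, and contradict the definition of $\Gamma$ on that vertex. Your additional remark that acyclicity prevents the appended vertex from already lying on the path (so the extension is a genuine path) is a small point of care the paper leaves implicit, but it does not change the argument.
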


\begin{proof} Let $v_i,v'_i\in V_i$ and suppose $\{v_i,v'_i\} \in \vec{\mathcal{E}}$. Let $\{v_0,...,v_i\}$ be a path of length $i$ with endpoint $v_i$. Then, $\{v_0,...,v_i,v'_i\}$ defines a directed path of length $i+1$ which contradicts the fact that $v'_i\in V_{i}$.
\end{proof}

Let $T(n,\ell)$ denote the $\ell$-partite Tur\'an graph with $n$ vertices and let $t(n,\ell)$ denote the number of edges of $T(n,\ell)$.

\begin{prop}\label{Prop: Turan}  Let $\vec{G}=(\mathcal{V},\vec{\mathcal{E}})$ be a DAG with $n$ vertices and such that the longest directed path has length $\ell$. Then, $\vec{G}$ has at most $t(n,\ell+1)$ edges. 
\end{prop}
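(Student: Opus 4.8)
The plan is to realize $\vec{G}$, after forgetting orientations, as an $(\ell+1)$-partite graph and then to read off the bound from Tur\'an's theorem. The construction is already in hand: associated with the function $\Gamma$ we have the partition $P_\Gamma=\{V_0,\dots,V_\ell\}$ of the vertex set $\mathcal{V}$ into exactly $\ell+1$ classes, where $V_i=\{v\in\mathcal{V}\mid\Gamma(v)=i\}$. First I would record that passing to the underlying undirected graph $G$ changes nothing for edge-counting: since $\vec{G}$ is acyclic it cannot contain both orientations of a pair, so there is at most one arc between any two vertices, and hence $|\vec{\mathcal{E}}|$ equals the number of edges of $G$.

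Next I would invoke Proposition \ref{Prop: indep}, which asserts that each induced subgraph $G[V_i]$ is edgeless. This says precisely that the classes $V_0,\dots,V_\ell$ form a proper $(\ell+1)$-colouring of $G$: every edge joins vertices lying in two distinct classes. Thus $G$ is $(\ell+1)$-partite, and in particular it can contain no clique on $\ell+2$ vertices, for such a clique would consist of $\ell+2$ pairwise adjacent vertices, forcing $\ell+2$ distinct colour classes, whereas only $\ell+1$ are available. Consequently $G$ is $K_{\ell+2}$-free. Applying the extremal form of Tur\'an's theorem recalled in the introduction, with the forbidden clique of size $\ell+2$, gives that $G$, and therefore $\vec{G}$, has at most $t(n,\ell+1)$ edges, namely the number of edges of the balanced $(\ell+1)$-partite Tur\'an graph $T(n,\ell+1)$.

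I expect no genuine obstacle here: the substantive work has already been carried out in Proposition \ref{Prop: indep}, which supplies the $(\ell+1)$-colouring, after which the inequality is exactly the statement of Tur\'an's theorem. The only point demanding care is the index bookkeeping, and it is worth flagging explicitly in the write-up: a longest directed path of \emph{length} $\ell$ produces $\ell+1$ classes $V_0,\dots,V_\ell$ and forbids the clique $K_{\ell+2}$, so the relevant extremal graph is $T(n,\ell+1)$ rather than $T(n,\ell)$, consistent with the statement. (One could alternatively bypass the clique-freeness reformulation and argue directly that among all partitions of $n$ vertices into $\ell+1$ classes the number of cross-edges $\binom{n}{2}-\sum_i\binom{|V_i|}{2}$ is maximized by the balanced partition, which is $t(n,\ell+1)$; but citing Tur\'an's theorem is cleaner.)
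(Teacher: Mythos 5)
Your proposal is correct and follows essentially the same route as the paper: both use the partition $P_\Gamma=\{V_0,\dots,V_\ell\}$ together with Proposition \ref{Prop: indep} to see that the underlying undirected graph is $(\ell+1)$-partite, and then bound its edges by $t(n,\ell+1)$. The only cosmetic difference is that you pass through $K_{\ell+2}$-freeness and Tur\'an's theorem, while the paper bounds the edge count of the $(\ell+1)$-partite graph directly; your added remarks (no anti-parallel arcs in a DAG, the index bookkeeping) are sound and merely make explicit what the paper leaves implicit.
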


\begin{proof} Consider the partition $P_\Gamma=\{V_0,...,V_\ell\}$ of $\mathcal{V}$. By Proposition \ref{Prop: indep}, this defines a 
$(\ell+1)$-partite directed graph. Thus, neglecting the orientation we obtain a complete $(\ell+1)$-partite graph with partition sets 
$V_0,...,V_\ell$. Therefore, the number of edges is at most $t(n,\ell+1)$.
\end{proof}

\begin{obs} It is readily seen that the bound in Proposition \ref{Prop: Turan} is best possible. Consider the Tur\'an graph $T(n,\ell+1)$ and any ordering of the $\ell+1$ independent sets $V_0,...,V_\ell$. Then, for every edge  $\{v_i,v_j\}$ in $T(n,\ell)$ with $v_i\in V_i$, 
$v_j\in V_j$ and $i<j$ let us assume the orientation $\{v_i,v_j\}$. It is trivial to check that the resulting graph is a DAG with 
$t(n,\ell+1)$ edges. 
\end{obs}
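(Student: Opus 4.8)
The plan is to show the bound is best possible by exhibiting, for every $n$ and $\ell$ with $n\geq \ell+1$, an explicit DAG on $n$ vertices whose longest directed path has length exactly $\ell$ and which attains the upper bound $t(n,\ell+1)$ of Proposition \ref{Prop: Turan}. The natural candidate is the Tur\'an graph $T(n,\ell+1)$ itself, suitably oriented. Let $V_0,\ldots,V_\ell$ be its $\ell+1$ independent parts, each of size $\lfloor n/(\ell+1)\rfloor$ or $\lceil n/(\ell+1)\rceil$ and hence nonempty since $n\geq \ell+1$. For every edge of $T(n,\ell+1)$ joining $v_i\in V_i$ to $v_j\in V_j$ with $i<j$, I would orient it as $\{v_i,v_j\}$, that is, from the part of smaller index to the part of larger index.

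First I would check that the resulting directed graph $\vec{G}$ is a DAG. This is immediate: listing the vertices by increasing part index produces a topological order, since every edge points from a smaller to a larger index, so by the proposition characterizing DAGs via topological orders $\vec{G}$ is acyclic. Equivalently, any directed cycle would have to return to a strictly smaller index, which is impossible.

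Next I would verify the length of the longest directed path. Along any directed path the part indices strictly increase, so a path can meet each of the $\ell+1$ parts at most once and therefore has length at most $\ell$. Conversely, choosing one vertex $v_i\in V_i$ from each nonempty part yields consecutive adjacencies in the complete multipartite graph, all oriented forward, so $v_0,v_1,\ldots,v_\ell$ is a directed path of length exactly $\ell$. Hence the longest directed path of $\vec{G}$ has length $\ell$, matching the hypothesis of Proposition \ref{Prop: Turan}.

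Finally, since reorienting edges does not change their number and the underlying undirected graph is $T(n,\ell+1)$, the graph $\vec{G}$ has exactly $t(n,\ell+1)$ edges, so the upper bound is attained. I expect the only subtlety to be the lower bound on the longest path: one must ensure that every part is nonempty, which is guaranteed by $n\geq \ell+1$, so that a path realizing length $\ell$ actually exists; the acyclicity and the edge count are then routine.
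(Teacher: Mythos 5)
Your proposal is correct and follows exactly the construction in the paper's remark: orient each edge of the Tur\'an graph $T(n,\ell+1)$ from the part of smaller index to the part of larger index, observe that the part ordering gives a topological order (hence a DAG), and note the edge count is unchanged. The only material you add beyond the paper's sketch is the explicit verification that the longest directed path has length exactly $\ell$ (indices strictly increase along paths, and one vertex per nonempty part realizes length $\ell$), which is precisely the ``trivial to check'' step the paper leaves to the reader.
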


\section{Reduced, strongly reduced and extremely reduced DAG.}

Let $\mathcal{O}$ be a topological ordering in a DAG $\vec{G}$. Given any two vertices $v,w$, and two directed paths in $\vec{G}$, $\gamma$,$\gamma'$, from $v$ to $w$, let us define $\gamma\cup_\mathcal{O} \gamma'$ as the sequence of vertices defined by the vertices in $\gamma\cup \gamma'$ in the order given by $\mathcal{O}$. Of course, this need not be, in general, a directed path from $v$ to $w$.  

Let $\Gamma(u,v)$ be the set of all directed paths from $u$ to $v$.  Let  $\cup_{\mathcal{O}}\{\gamma \, | \, \gamma \in \Gamma(u,v)\}$ represent the sequence of all the vertices from the paths in $\Gamma(u,v)$ ordered according to $\mathcal{O}$.

\begin{definicion} A finite DAG $\vec{G}$ is \emph{strongly reduced} if for any topological ordering $\mathcal{O}$ of $\vec{G}$, every pair of vertices, $v,w$, and every pair of directed paths,	$\gamma,\gamma'$, from $v$ to $w$, 	then $\gamma\cup_\mathcal{O} \gamma'$ defines a directed path from $v$ to $w$.
\end{definicion}

Let $\vec{G}$ be DAG. Given any two vertices $v,w$, and two directed paths in $\vec{G}$, $\gamma$,$\gamma'$, from $v$ to $w$, let us define 
$\gamma \leq \gamma'$ if every vertex in $\gamma$ is also in $\gamma'$. Clearly, $``\leq "$ is a partial order.

A vertex $w$ is \emph{reachable} from a vertex $v$ if there is a directed path from $v$ to $w$.

\begin{prop}\label{Prop: reduced} Given a finite DAG $\vec{G}=(\mathcal{V},\vec{\mathcal{E}})$, the following properties are equivalent:
\begin{itemize}
	\item[i)] For every pair of vertices $v,w$ and every pair of paths,	$\gamma,\gamma'$, from $v$ to $w$, there exists a directed path from $v$ to $w$, $\gamma''$, such that $\gamma, \gamma'\leq \gamma''$. 
	\item[ii)] For every pair of vertices $v,w$ such that $w$ is reachable from $v$, there is a directed path from $v$ to $w$, 
	$\gamma_M$, such that for every directed path, $\gamma$, from $v$ to $w$, $\gamma \leq \gamma_M$.
	\item[iii)] For every topological ordering $\mathcal{O}$ of $\vec{G}$ and any pair of vertices $v,w$,  $\cup_{\mathcal{O}}\{\gamma \, | \, \gamma \in \Gamma(u,v)\}$ defines a directed path from $v$ to $w$.
\end{itemize}
\end{prop}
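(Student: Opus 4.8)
The plan is to show the three conditions are equivalent by proving the cyclic chain (i) $\Rightarrow$ (ii) $\Rightarrow$ (iii) $\Rightarrow$ (i). The structural fact underlying everything is that, since $\vec{G}$ is a finite DAG, no directed path repeats a vertex, so for any pair $v,w$ the set $\Gamma(v,w)$ of directed paths from $v$ to $w$ is finite; moreover $``\leq "$ has already been observed to be a partial order on it. Throughout I would identify a path $\gamma$ with its vertex set $V(\gamma)$, so that $\gamma \leq \gamma'$ simply means $V(\gamma) \subseteq V(\gamma')$. (Note that in (iii) the set $\Gamma(u,v)$ should read $\Gamma(v,w)$, matching the phrase ``from $v$ to $w$''.)

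For (i) $\Rightarrow$ (ii) I would fix $v,w$ with $w$ reachable from $v$, so that $\Gamma(v,w)=\{\gamma_1,\dots,\gamma_m\}$ is nonempty and finite. Condition (i) says that any two elements of $\Gamma(v,w)$ have an upper bound lying inside $\Gamma(v,w)$. I would then argue by induction on $m$ that the whole family has a single upper bound in $\Gamma(v,w)$: given an upper bound $\beta$ for $\gamma_1,\dots,\gamma_{m-1}$, apply (i) to $\beta$ and $\gamma_m$ to obtain $\beta'\in\Gamma(v,w)$ with $\beta,\gamma_m\leq\beta'$, and transitivity of $``\leq "$ then gives $\gamma_i\leq\beta'$ for every $i$. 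This $\beta'$ is the desired greatest element $\gamma_M$, since it dominates every path from $v$ to $w$ and is itself such a path.

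For (ii) $\Rightarrow$ (iii) I would fix a topological ordering $\mathcal{O}$ and a pair $v,w$; if $w$ is not reachable from $v$ there is nothing to prove, so take the maximum path $\gamma_M$ provided by (ii). The key observation is the set equality $\bigcup_{\gamma\in\Gamma(v,w)}V(\gamma)=V(\gamma_M)$: every path is dominated by $\gamma_M$, while $\gamma_M$ is itself one of the paths. Hence $\cup_{\mathcal{O}}\{\gamma \mid \gamma\in\Gamma(v,w)\}$ is exactly the vertex set of $\gamma_M$ listed according to $\mathcal{O}$. Writing $\gamma_M=(v=u_0,u_1,\dots,u_k=w)$, each edge $\{u_{i-1},u_i\}\in\vec{\mathcal{E}}$ forces $u_{i-1}$ to precede $u_i$ in any topological ordering, so $\mathcal{O}$ lists these vertices precisely in the path order $u_0,\dots,u_k$; thus the sequence is the directed path $\gamma_M$. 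Finally (iii) $\Rightarrow$ (i) is immediate: given paths $\gamma,\gamma'$ from $v$ to $w$, pick any topological ordering and set $\gamma''=\cup_{\mathcal{O}}\{\gamma \mid \gamma\in\Gamma(v,w)\}$, which by (iii) is a directed path from $v$ to $w$ and contains every vertex of $\gamma$ and of $\gamma'$, so $\gamma,\gamma'\leq\gamma''$.

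The only step requiring genuine care is (i) $\Rightarrow$ (ii): one must resist simply ``taking a maximal element,'' since maximal need not mean greatest in a general poset. The argument must combine finiteness of $\Gamma(v,w)$ with the pairwise–upper–bound hypothesis, via the induction above and the transitivity of $``\leq "$, to produce a single path above all of $\Gamma(v,w)$. The passage (ii) $\Rightarrow$ (iii) instead hinges on the compatibility between the topological order and the edge orientations along $\gamma_M$, which is exactly the defining property of a topological order and hence routine once the vertex–set equality is recorded.
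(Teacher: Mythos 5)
Your proof is correct and takes essentially the same route as the paper: finiteness and transitivity of $\leq$ give (i) $\Leftrightarrow$ (ii), and the identification of $\cup_{\mathcal{O}}\{\gamma \,|\, \gamma\in\Gamma(v,w)\}$ with $\gamma_M$ handles (iii). The paper compresses all of this into two sentences declaring each step trivial; your induction for (i) $\Rightarrow$ (ii) and the maximal-versus-greatest caveat simply make explicit the details it leaves implicit.
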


\begin{proof} Since the graph is finite and the relation '$\leq$' is transitive, $i)$ and $ii)$ are trivially equivalent.

If $ii)$ is satisfied, then it is trivial to see that  $\cup_{\mathcal{O}}\{\gamma \, | \, \gamma \in \Gamma(u,v)\}=\gamma_M$ and $iii)$ is satisfied. Also, it is readily seen that $iii)$ implies $ii)$ taking $\gamma_M:=\cup_{\mathcal{O}}\{\gamma \, | \, \gamma \in \Gamma(u,v)\}$.
\end{proof}

\begin{definicion} We say that a finite DAG $\vec{G}$ is \emph{reduced} if it satisfies any of the properties from Proposition 
\ref{Prop: reduced}.
\end{definicion}

\begin{prop}\label{Prop: strongly-reduced} If a finite DAG $\vec{G}$ is strongly reduced, then $\vec{G}$ is reduced.
\end{prop}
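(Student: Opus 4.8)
The plan is to derive property i) of Proposition \ref{Prop: reduced}, since being reduced is defined as satisfying any one of the three equivalent conditions stated there, and property i) is the one phrased in terms of just two paths, which matches the hypothesis of strong reducedness most directly.

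First I would invoke the earlier proposition guaranteeing that every DAG admits a topological order, and fix one such ordering $\mathcal{O}$ of $\vec{G}$. Then, given an arbitrary pair of vertices $v,w$ and an arbitrary pair of directed paths $\gamma,\gamma'$ from $v$ to $w$, the strongly reduced hypothesis applied to this particular $\mathcal{O}$ tells us that the sequence $\gamma\cup_\mathcal{O}\gamma'$ is itself a directed path from $v$ to $w$. Denote this path by $\gamma''$.

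The key observation is that, by the very definition of $\gamma\cup_\mathcal{O}\gamma'$ as the sequence of the vertices in $\gamma\cup\gamma'$ arranged according to $\mathcal{O}$, the set of vertices appearing in $\gamma''$ is exactly the union of the vertices of $\gamma$ and the vertices of $\gamma'$. Consequently every vertex of $\gamma$ lies in $\gamma''$ and every vertex of $\gamma'$ lies in $\gamma''$; in the notation of the partial order this reads $\gamma\leq\gamma''$ and $\gamma'\leq\gamma''$. Since $v,w,\gamma,\gamma'$ were arbitrary, property i) of Proposition \ref{Prop: reduced} holds, and hence $\vec{G}$ is reduced.

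I do not expect a serious obstacle here: the entire argument rests on aligning the definition of strong reducedness (a statement about the two-path combination $\gamma\cup_\mathcal{O}\gamma'$) with condition i) of Proposition \ref{Prop: reduced} (a statement about the existence of a common upper bound $\gamma''$ for two paths). The only point requiring care is the bookkeeping that the vertex set of $\gamma\cup_\mathcal{O}\gamma'$ coincides with the union of the two vertex sets, which is immediate from the construction. I note in passing that one could instead target condition iii) directly, but that condition concerns the join of \emph{all} paths in $\Gamma(v,w)$ simultaneously and would require iterating the two-path hypothesis; aiming at i) avoids that induction.
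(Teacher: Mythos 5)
Your proof is correct, but it follows a different route from the paper's. The paper's one-line proof targets condition iii) of Proposition \ref{Prop: reduced}: since the graph is finite, the set $\Gamma(v,w)$ of all directed paths from $v$ to $w$ is finite, and iterating the strongly-reduced hypothesis pairwise (the union of the first two paths is again a path from $v$ to $w$, then join the third, and so on) shows that $\cup_{\mathcal{O}}\{\gamma \mid \gamma\in\Gamma(v,w)\}$ is a directed path, which is exactly iii). You instead aim at condition i): fix one topological order, apply the hypothesis a single time to the given pair $\gamma,\gamma'$, and observe that $\gamma'':=\gamma\cup_\mathcal{O}\gamma'$ has vertex set exactly the union of the vertex sets of $\gamma$ and $\gamma'$, whence $\gamma,\gamma'\leq\gamma''$. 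Both arguments rest on the equivalence already proved in Proposition \ref{Prop: reduced}, so neither is more general; what yours buys is the elimination of the finiteness-driven iteration over all of $\Gamma(v,w)$ (finiteness enters your argument only through the existence of a topological order), while the paper's version directly produces the ``join of all paths'' object that condition iii) and the maximal path $\gamma_M$ of condition ii) are built from. Your closing remark correctly identifies precisely this trade-off.
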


\begin{proof} Since the graph is finite, it is immediate to see that being strongly reduced implies $iii)$.
\end{proof}

\begin{obs} The converse is not true. The graph in the left from Figure \ref{Ejp_1} is clearly reduced. Notice that the directed path $\gamma_M:=\{v_1,v_2,v_3,v_4,v_5\}$ is an upper bound for every directed path from $v_1$ to $v_5$. However, if we consider the directed paths $\gamma=\{v_1,v_2,v_5\}$ and $\gamma'=\{v_1,v_4,v_5\}$ with the topological order $\mathcal{O}=\{v_1,v_2,v_3,v_4,v_5\}$, then $\gamma \cup_\mathcal{O}\gamma'=\{v_1,v_2,v_4,v_5\}$ which is not a directed path.
\end{obs}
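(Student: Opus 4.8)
The statement asserts that the converse of Proposition~\ref{Prop: strongly-reduced} fails, so the task is purely one of existence: I need a single finite DAG that is reduced (i.e.\ satisfies one of the equivalent conditions of Proposition~\ref{Prop: reduced}) but is \emph{not} strongly reduced. The plan is therefore to design a small explicit graph in which every pair of connected vertices admits a maximal dominating path, yet some $\mathcal{O}$-merge $\gamma\cup_\mathcal{O}\gamma'$ of two paths with common endpoints fails to be a directed path. The guiding idea is to force a single ``long'' path to dominate everything, while arranging two ``shortcut'' paths whose interleaving by $\mathcal{O}$ visits a pair of vertices that are not adjacent.

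Concretely, I would take vertices $v_1,\dots,v_5$, include the edges of the path $v_1v_2v_3v_4v_5$, and add the two shortcut edges $v_1\to v_4$ and $v_2\to v_5$, while deliberately omitting $v_2\to v_4$. To check that this graph is reduced I would use condition ii) of Proposition~\ref{Prop: reduced}: the only pairs $(v,w)$ joined by more than one directed path are $(v_1,v_4)$, $(v_2,v_5)$ and $(v_1,v_5)$, and for each of these the corresponding segment of the long path $v_1v_2v_3v_4v_5$ contains every competing path vertexwise, hence serves as the required upper bound $\gamma_M$; all other reachable pairs are joined by a unique path and are dominated trivially.

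To see that the graph is not strongly reduced, I would fix the topological order $\mathcal{O}=\{v_1,v_2,v_3,v_4,v_5\}$ and the two directed paths $\gamma=v_1v_2v_5$ and $\gamma'=v_1v_4v_5$; their $\mathcal{O}$-merge is the sequence $\{v_1,v_2,v_4,v_5\}$, which is not a directed path because $v_2\to v_4$ is not an edge. The only genuine obstacle is the reducedness verification: one must be sure that introducing the shortcuts does not create a new pair of vertices whose several connecting paths lack a common upper bound. This is controlled by the design choice that each shortcut's endpoints already lie on the dominating long path, which makes the check a finite and routine case analysis.
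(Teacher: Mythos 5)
Your proposal is correct and coincides with the paper's own argument: the explicit graph you construct (the path $v_1v_2v_3v_4v_5$ plus the shortcuts $v_1\to v_4$ and $v_2\to v_5$, with $v_2\to v_4$ omitted) is precisely the left-hand graph of Figure~\ref{Ejp_1}, and you use the same pair of paths and the same topological order to exhibit the failure of strong reducedness. Your explicit case analysis verifying reducedness via condition ii) is a slightly more detailed write-up of what the paper dismisses as ``clearly reduced,'' but the approach is identical.
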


\begin{figure}[ht]
\centering
\includegraphics[scale=0.4]{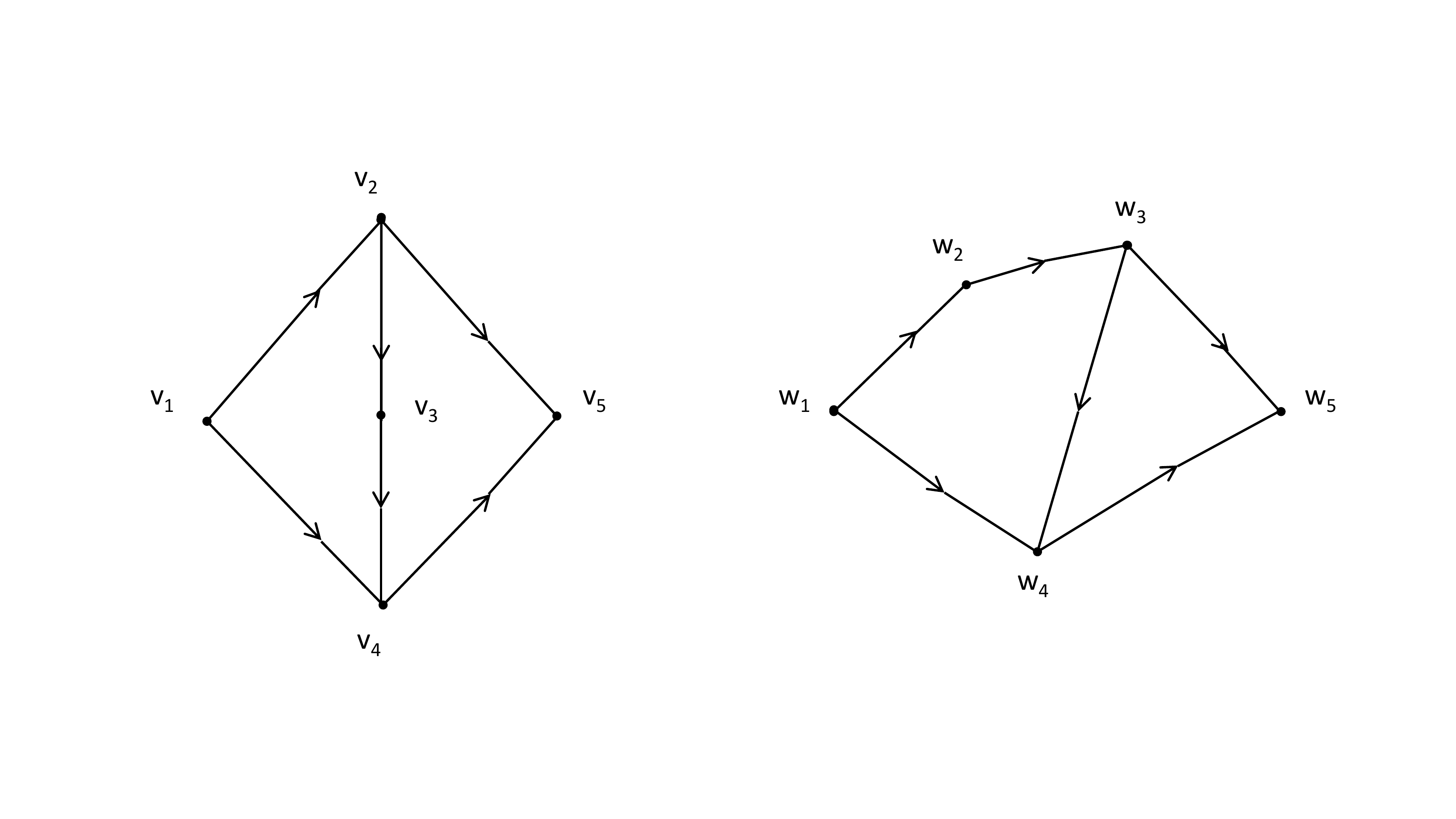}
\caption{Being reduced does not imply being strongly reduced and being strongly reduced does not imply being extremely reduced.}
\label{Ejp_1}
\end{figure}

\begin{definicion} Given a finite DAG $\vec{G}$ and a vertex $v\in \mathcal{V}$ we say that $w$ is an \textbf{ancestor} of $v$ if there is a directed path $\{w=v_0,...,v_k=v\}$ and $w$ is a \textbf{descendant} of $v$ if there is a directed path $\{v=v_0,...,v_k=w\}$. 
\end{definicion}

\begin{definicion} We say that a finite DAG $\vec{G}$ is \emph{extremely reduced} if for every pair of non-adjacent vertices $x,y$, if $x,y$ have a common ancestor, then they do not have a common descendant.
\end{definicion}

\begin{prop}\label{Prop: light-reduced} If a DAG $\vec{G}=(\mathcal{V},\vec{\mathcal{E}})$ is extremely reduced, then it is strongly reduced.
\end{prop}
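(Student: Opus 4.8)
The plan is to take an arbitrary topological ordering $\mathcal{O}$, an arbitrary pair of vertices $v,w$, and two directed paths $\gamma,\gamma'$ from $v$ to $w$, and to verify directly that $\gamma\cup_\mathcal{O}\gamma'$ is a directed path. Write $S$ for the set of vertices occurring in $\gamma$ or in $\gamma'$, and list its elements in the order given by $\mathcal{O}$ as $v=u_0<u_1<\cdots<u_m=w$; this list is exactly $\gamma\cup_\mathcal{O}\gamma'$. Because $\mathcal{O}$ is a topological ordering, the vertices of each of $\gamma$ and $\gamma'$ appear along that path in strictly increasing $\mathcal{O}$-order, so $u_1=\min_\mathcal{O}(S\setminus\{v\})$ is the smaller of the two immediate successors of $v$ along $\gamma,\gamma'$, and $u_{m-1}=\max_\mathcal{O}(S\setminus\{w\})$ is the larger of the two immediate predecessors of $w$. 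To prove the claim it suffices to show $\{u_{i-1},u_i\}\in\vec{\mathcal{E}}$ for every $1\le i\le m$.

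The main step is to control the interior vertices $u_1,\dots,u_{m-1}$. Each such $u_i$ is distinct from both $v$ and $w$ and lies on $\gamma$ or $\gamma'$, so the relevant subpath exhibits $v$ as an ancestor of $u_i$ and $w$ as a descendant of $u_i$. Hence any two interior vertices $u_i,u_j$ have $v$ as a common ancestor and $w$ as a common descendant. I would then invoke the hypothesis that $\vec{G}$ is extremely reduced in contrapositive form: a pair of vertices possessing both a common ancestor and a common descendant cannot be non-adjacent. Thus every pair of interior vertices is adjacent, and since $u_i<u_j$ in $\mathcal{O}$ forces the orientation $u_i\to u_j$, in particular each consecutive interior pair satisfies $\{u_i,u_{i+1}\}\in\vec{\mathcal{E}}$.

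It then remains to close up the two ends, which is where I expect the only real care to be needed, since $v$ and $w$ are precisely the vertices that cannot play the role of the common ancestor or descendant in the argument above. Here I would argue directly from the description of $u_1$ and $u_{m-1}$: because $u_1$ is an immediate successor of $v$ along one of the two paths, $\{v,u_1\}\in\vec{\mathcal{E}}$, and symmetrically $\{u_{m-1},w\}\in\vec{\mathcal{E}}$. Combining the two endpoint edges with the interior edges yields the directed path $v=u_0\to u_1\to\cdots\to u_m=w$, so $\gamma\cup_\mathcal{O}\gamma'$ is a directed path from $v$ to $w$; as $\mathcal{O},v,w,\gamma,\gamma'$ were arbitrary, $\vec{G}$ is strongly reduced.

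For completeness I would dispose of the degenerate small cases (if $m\le 1$ the sequence has at most one edge, which is immediate, and with no interior vertices the two endpoint observations already describe the whole path). The one subtlety to keep in mind throughout is that the adjacency supplied by extreme reducedness is an undirected statement; it is the ordering $\mathcal{O}$ that pins each such edge to point from the $\mathcal{O}$-smaller endpoint to the $\mathcal{O}$-larger one, which is what guarantees that the consecutive edges chain into a genuine directed path rather than merely a connected subgraph.
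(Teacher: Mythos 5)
Your proof is correct and follows essentially the same route as the paper's: handle the two endpoint edges via the immediate successor of $v$ and immediate predecessor of $w$, and obtain each interior consecutive edge from the contrapositive of extreme reducedness (common ancestor $v$, common descendant $w$ force adjacency) with the topological order fixing the orientation. The only cosmetic difference is that the paper first checks whether a consecutive pair lies on the same path (in which case the path edge is used directly) before invoking extreme reducedness, whereas you apply extreme reducedness uniformly to all interior pairs; both are valid.
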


\begin{proof} Let $\gamma=\{v,v_1,...,v_n,w\}$ and $\gamma'=\{v,w_0,...,w_m,w\}$ two directed paths in $\vec{G}$ from $v$ yo $w$. Let $\mathcal{O}$ be any topological order in $\vec{G}$ and consider $\gamma \cup_\mathcal{O} \gamma'=\{v,z_1,..,z_k,w\}$. First, notice that $z_1$ is either $v_1$ or $w_1$. Therefore, $\{v,z_1\}\in \vec{\mathcal{E}}$. Also, $z_k$ is either $v_n$ or $w_m$, and $\{z_k,w\}\in \vec{\mathcal{E}}$. Now, for every $1< i \leq k$, let us see that $\{z_{i-1},z_i\}\in \vec{\mathcal{E}}$.
If $z_{i-1},z_i\in \gamma$ or $z_{i-1},z_i\in \gamma'$, then they are consecutive vertices in a directed path and we are done. Otherwise, since $z_{i-1},z_i$ have a common ancestor $v$ and a common descendant $w$, then there is a directed edge joining them and, since 
$z_{i-1},z_i$ are sorted by a topological order,  $\{z_{i-1},z_i\}\in \vec{\mathcal{E}}$.
\end{proof}

\begin{obs} The converse is not true. The graph in the right from Figure \ref{Ejp_1} b), is strongly reduced. However, vertices $w_2$ and $w_4$ are not adjacent and have a common ancestor and a common descendent.
\end{obs}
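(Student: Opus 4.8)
The plan is to verify directly the three assertions made about the graph $\vec G$ drawn on the right of Figure \ref{Ejp_1}, reading its vertex set $\{w_1,\dots,w_n\}$ and oriented edges off the figure. Two of these are immediate from inspection. First, checking the edge list shows that neither $\{w_2,w_4\}$ nor $\{w_4,w_2\}$ is a directed edge, so $w_2$ and $w_4$ are non-adjacent. Second, I would exhibit an explicit common ancestor $a$ by displaying directed paths from $a$ to $w_2$ and from $a$ to $w_4$, and an explicit common descendant $d$ by displaying directed paths from $w_2$ to $d$ and from $w_4$ to $d$. Since $w_2$ and $w_4$ are non-adjacent yet possess both a common ancestor and a common descendant, the defining condition of \emph{extremely reduced} fails for the pair $(w_2,w_4)$; hence $\vec G$ is not extremely reduced.

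The substantive step is to show that $\vec G$ is nevertheless strongly reduced, which is a finite verification. First I would fix a topological order $\mathcal O$ of $\vec G$, noting that the definition demands the property for every topological order, so I must either check that $\mathcal O$ is essentially forced or argue that the conclusion is independent of the chosen order. Then I would enumerate the ordered pairs $(v,w)$ between which more than one directed path exists; for every other pair the strongly reduced condition is vacuous, since there are no two distinct paths. For each such pair and each pair of directed paths $\gamma,\gamma'$ from $v$ to $w$, I would list the vertices of $\gamma\cup\gamma'$, sort them according to $\mathcal O$ to form $\gamma\cup_{\mathcal O}\gamma'$, and confirm that consecutive vertices of this sorted sequence are always joined by an edge of $\vec G$, so that $\gamma\cup_{\mathcal O}\gamma'$ is a directed path. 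A helpful reduction is the observation that whenever $\gamma\leq\gamma'$ the union is just $\gamma'$, which is already a directed path, so only pairs of directed paths that are incomparable under $\leq$ require genuine checking.

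I expect the main obstacle to be bookkeeping rather than any conceptual difficulty: the real risk is overlooking a pair $(v,w)$ that admits two incomparable directed paths, since a single missed pair could conceal a failure of the strongly reduced property. Organizing the search by the common ancestor/descendant pairs, and discarding nested cases via the $\gamma\leq\gamma'$ shortcut, keeps the enumeration manageable for a graph of this size. Once every relevant merge $\gamma\cup_{\mathcal O}\gamma'$ is confirmed to be a directed path, $\vec G$ is strongly reduced; combined with the failure of extremely reduced established above, this exhibits $\vec G$ as a strongly reduced DAG that is not extremely reduced, proving that the converse of Proposition \ref{Prop: light-reduced} does not hold.
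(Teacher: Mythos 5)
Your proposal is correct and is essentially the paper's own approach: the paper offers no argument beyond pointing at the figure, and a direct finite verification of the example (non-adjacency of $w_2,w_4$, an explicit common ancestor and descendant, and an enumeration of all vertex pairs admitting two incomparable directed paths) is exactly what the remark rests on. The one fork you leave open --- the quantifier over all topological orders --- closes easily and in general: if $\gamma\cup_{\mathcal{O}}\gamma'$ is a directed path for one topological order $\mathcal{O}$, then consecutive vertices of that sequence are joined by directed edges, so every topological order places the vertices of $\gamma\cup\gamma'$ in the same relative order and yields the very same merged path; hence it suffices to check a single topological order, which makes your finite check complete.
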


\begin{prop}\label{Prop: equiv} If $\vec{G}$ is transitive, then the following properties are equivalent:
\begin{itemize}
	\item $\vec{G}$ is extremely reduced,
	\item $\vec{G}$ is strongly reduced,
	\item $\vec{G}$ is reduced.
\end{itemize}
\end{prop}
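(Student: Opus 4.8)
The plan is to exploit the two implications already in hand. Propositions \ref{Prop: light-reduced} and \ref{Prop: strongly-reduced} show, for \emph{any} finite DAG, that extremely reduced $\Rightarrow$ strongly reduced $\Rightarrow$ reduced; neither argument uses transitivity, so both implications are available here as well. Consequently the only thing left to establish is the closing implication reduced $\Rightarrow$ extremely reduced under the additional hypothesis that $\vec{G}$ is transitive, after which the three notions form a single cycle of implications and are therefore equivalent.

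To prove that a transitive reduced DAG is extremely reduced I would argue by contradiction. Suppose $\vec{G}$ is reduced and transitive but not extremely reduced, so there are two distinct non-adjacent vertices $x,y$ possessing a common ancestor $a$ and a common descendant $d$. Concatenating a directed path $a\to x$ with a directed path $x\to d$, and likewise $a\to y$ with $y\to d$, yields two directed paths $\gamma$ and $\gamma'$ from $a$ to $d$, the first passing through $x$ and the second through $y$. Since $d$ is reachable from $a$ and $\vec{G}$ is reduced, property $ii)$ of Proposition \ref{Prop: reduced} supplies a directed path $\gamma_M$ from $a$ to $d$ that dominates every directed $a$--$d$ path; in particular $\gamma\leq\gamma_M$ and $\gamma'\leq\gamma_M$, so both $x$ and $y$ are vertices of the single directed path $\gamma_M$.

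Now the contradiction comes from transitivity. On the directed path $\gamma_M$ the two distinct vertices $x,y$ occur in a definite order, say $x$ before $y$; the segment of $\gamma_M$ between them is then a directed path from $x$ to $y$ of length at least one. A short induction on the length of this path, using the defining property of transitivity ($\{u,v\},\{v,z\}\in\vec{\mathcal{E}}$ imply $\{u,z\}\in\vec{\mathcal{E}}$), collapses it to a single directed edge $\{x,y\}\in\vec{\mathcal{E}}$, contradicting the non-adjacency of $x$ and $y$. I expect the main subtlety to be purely bookkeeping: making sure property $ii)$ really delivers one path $\gamma_M$ containing \emph{both} $x$ and $y$ rather than two separate maximal paths, and checking that the degenerate cases $a\in\{x,y\}$ or $d\in\{x,y\}$ are harmless, since then one already has a directed path between $x$ and $y$ and transitivity finishes immediately. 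No genuinely hard step arises once the reduced property has been used to merge the two paths into $\gamma_M$.
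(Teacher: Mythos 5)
Your proposal is correct and follows essentially the same route as the paper: the first two implications are quoted from Propositions \ref{Prop: light-reduced} and \ref{Prop: strongly-reduced}, and the closing implication uses the reduced property to merge the two $a$--$d$ paths through $x$ and $y$ into a single directed path containing both, after which transitivity forces the edge $\{x,y\}$. The only cosmetic differences are that the paper argues the contrapositive directly via property $i)$ of Proposition \ref{Prop: reduced} rather than by contradiction via property $ii)$, and it leaves the transitivity-collapse of the $x$--$y$ subpath implicit where you spell out the induction.
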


\begin{proof} By proposition \ref{Prop: light-reduced} if  $\vec{G}$ is extremely reduced, then it is strongly reduced. By Proposition \ref{Prop: strongly-reduced}, if  $\vec{G}$ is strongly reduced, then it is reduced.

Suppose  $\vec{G}$ is reduced and suppose that two vertices $x,y$ have a common ancestor, $v$, and a common descendant, $w$. Then, there are two directed paths $\gamma,\gamma'$ from $v$ to $w$ such that $x\in \gamma$ and $y\in \gamma'$. By property $i)$ in \ref{Prop: reduced}, there exist a path 
$\gamma''$ in $\vec{G}$ from $v$ to $w$ such that $\gamma,\gamma'\leq \gamma''$. In particular, $x,y\in \gamma''$. Therefore, either $x$ is reachable from $y$ or $y$ is reachable from $x$ in $\vec{G}$. Since $\vec{G}$ is transitive, this implies that $x,y$ are adjacent. Therefore, $\vec{G}$ is extremely reduced. 
\end{proof}

\begin{definicion} Given a DAG $\vec{G}=(\mathcal{V},\vec{\mathcal{E}})$, the graph with vertex set $\mathcal{V}$ and edge set $\vec{\mathcal{E}'}:=\vec{\mathcal{E}}\cup \{\{v,w\} \, | \ w \mbox{ is reachable from } v\}$ is called the \emph{transitive closure} of $\vec{G}$, 
$T[\vec{G}]$.
\end{definicion}

It is immediate to check the following:

\begin{prop}\label{Prop: transitive} Given any DAG $\vec{G}$, $T[\vec{G}]$ is transitive.
\end{prop}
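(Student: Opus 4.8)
The plan is to show that the edge relation of $T[\vec{G}]$ is nothing but the reachability relation of $\vec{G}$, and then to verify directly that reachability is transitive by concatenating paths. First I would observe that $\vec{\mathcal{E}}\subseteq \{\{v,w\}\mid w \text{ is reachable from } v\}$, since any directed edge $\{v,w\}\in\vec{\mathcal{E}}$ is itself a directed path of length $1$ from $v$ to $w$, witnessing that $w$ is reachable from $v$. Hence the edge set of $T[\vec{G}]$ simplifies to $\vec{\mathcal{E}'}=\{\{v,w\}\mid w \text{ is reachable from } v\}$; that is, $\{v,w\}\in\vec{\mathcal{E}'}$ if and only if $w$ is reachable from $v$. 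This uniform reading is what makes the verification clean.

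Next, to establish transitivity, I would take arbitrary vertices $x,y,z\in\mathcal{V}$ with $\{x,y\},\{y,z\}\in\vec{\mathcal{E}'}$ and unwind the definition: $y$ is reachable from $x$ and $z$ is reachable from $y$. Thus there are directed paths $\gamma_1=\{x=u_0,\dots,u_k=y\}$ and $\gamma_2=\{y=w_0,\dots,w_m=z\}$ in $\vec{G}$. Concatenating them along the shared vertex $y$ yields the sequence $\{x=u_0,\dots,u_k=w_0,\dots,w_m=z\}$, in which every pair of consecutive vertices is joined by a directed edge of $\vec{G}$; this is a directed walk from $x$ to $z$. I would then argue that $z$ is reachable from $x$: should a vertex repeat along the concatenation, the portion between two of its occurrences would form a directed cycle, contradicting the hypothesis that $\vec{G}$ is a DAG---so in fact no vertex repeats and the concatenation is genuinely a directed path from $x$ to $z$. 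Either way $z$ is reachable from $x$, whence $\{x,z\}\in\vec{\mathcal{E}'}$, which is exactly the transitivity condition.

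I do not expect any real obstacle here; the statement is essentially the assertion that the reachability relation is transitive, which is immediate from composition of paths. The only points requiring a word of care are the identification of $\vec{\mathcal{E}'}$ with the reachability relation, so that both the hypotheses and the conclusion are read uniformly in terms of reachability, and the brief acyclicity argument guaranteeing that the concatenated walk does not revisit a vertex. Neither involves more than a sentence, so the proof should be short.
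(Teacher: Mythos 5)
Your proof is correct: reading the edge set $\vec{\mathcal{E}'}$ of $T[\vec{G}]$ as exactly the reachability relation of $\vec{G}$ and then concatenating a path from $x$ to $y$ with a path from $y$ to $z$ (using acyclicity to see the resulting walk is a genuine path) is precisely the routine verification. The paper offers no proof at all---it introduces this proposition with the phrase ``it is immediate to check the following''---so your argument simply supplies the standard check the authors are alluding to, and does so without any gaps.
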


\begin{prop}\label{Prop: closure} If a DAG $\vec{G}$ is reduced, then the transitive closure $T[\vec{G}]$ is also reduced.
\end{prop}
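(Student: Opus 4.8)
The plan is to verify characterization $ii)$ of Proposition \ref{Prop: reduced} directly in $T[\vec{G}]$, using the maximal paths that $\vec{G}$ already provides. First I would record two elementary facts about the transitive closure. Since $\vec{G}\subseteq T[\vec{G}]$, every directed path of $\vec{G}$ is a directed path of $T[\vec{G}]$. Moreover, reachability in $T[\vec{G}]$ coincides with reachability in $\vec{G}$: each edge of $T[\vec{G}]$ lies either in $\vec{\mathcal{E}}$ or is one of the added edges $\{v,w\}$, and in both cases it witnesses that $w$ is reachable from $v$ in $\vec{G}$; so any $T[\vec{G}]$-path reads off as a chain of reachabilities in $\vec{G}$ that compose by transitivity, while the reverse inclusion is immediate from $\vec{G}\subseteq T[\vec{G}]$.

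Next, fix vertices $v,w$ with $w$ reachable from $v$ in $T[\vec{G}]$ (equivalently, in $\vec{G}$). Because $\vec{G}$ is reduced, property $ii)$ of Proposition \ref{Prop: reduced} supplies a directed path $\gamma_M$ of $\vec{G}$ from $v$ to $w$ with $\gamma\leq \gamma_M$ for every $\vec{G}$-path $\gamma$ from $v$ to $w$. I claim this same $\gamma_M$, which is a path of $T[\vec{G}]$ by the first fact, is the maximal path required by $ii)$ in $T[\vec{G}]$. To see this, let $\delta=\{v=u_0,u_1,\dots,u_k=w\}$ be any directed path of $T[\vec{G}]$ from $v$ to $w$ and let $u_i$ be one of its vertices. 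Since each edge $\{u_{j-1},u_j\}$ of $\delta$ means $u_j$ is reachable from $u_{j-1}$ in $\vec{G}$, the vertex $u_i$ is reachable from $v$ and $w$ is reachable from $u_i$, both in $\vec{G}$.

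The key step is then to promote these two reachabilities into a single $\vec{G}$-path from $v$ to $w$ passing through $u_i$. Concatenating a $\vec{G}$-path from $v$ to $u_i$ with a $\vec{G}$-path from $u_i$ to $w$ yields a directed walk in $\vec{G}$; if it repeated a vertex $x\neq u_i$ it would force $x\rightsquigarrow u_i\rightsquigarrow x$, a directed cycle, impossible in the DAG $\vec{G}$. Hence the concatenation is a genuine $\vec{G}$-path $\gamma_i$ from $v$ to $w$ with $u_i\in\gamma_i$. By the maximality of $\gamma_M$, $\gamma_i\leq\gamma_M$, so $u_i\in\gamma_M$; as $u_i$ was arbitrary, $\delta\leq\gamma_M$. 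Thus $\gamma_M$ dominates every $T[\vec{G}]$-path from $v$ to $w$, so $T[\vec{G}]$ satisfies $ii)$ and is therefore reduced.

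I expect the only genuine obstacle to be this promotion step. An edge of the transitive closure records merely the \emph{existence} of a path, so one must reconstruct an actual path of $\vec{G}$ and check, via acyclicity, that the pieces fit together without collapsing into a cycle; this is exactly what guarantees that the dominating path $\gamma_M$ furnished by $\vec{G}$ also controls the new paths created in $T[\vec{G}]$. Everything else is routine bookkeeping with the order $\leq$ and the equivalence of the reachability relations in $\vec{G}$ and $T[\vec{G}]$.
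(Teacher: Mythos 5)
Your proof is correct, and it takes a recognizably different route from the paper's. The paper works with characterization $i)$ of Proposition \ref{Prop: reduced}: given two paths $\gamma,\gamma'$ in $T[\vec{G}]$, it expands each closure edge into a $\vec{G}$-path to obtain sequences $\gamma_0,\gamma_0'$ in $\vec{G}$ dominating $\gamma,\gamma'$, and then applies property $i)$ in $\vec{G}$ to get a common upper bound, which also serves in $T[\vec{G}]$. You instead work with characterization $ii)$: you fix the single maximal path $\gamma_M$ that $\vec{G}$ provides, and show it already dominates every $T[\vec{G}]$-path by a vertex-by-vertex argument --- each vertex $u_i$ of a closure path is squeezed onto some $\vec{G}$-path from $v$ to $w$ (concatenation plus acyclicity), hence onto $\gamma_M$. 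A genuine merit of your version is that it makes explicit the step the paper glosses over: when one expands closure edges (or concatenates reachability witnesses), one gets a priori only a walk, and it is the DAG property that forbids repeated vertices and turns it into a path. The paper sidesteps this by calling $\gamma_0$ a ``sequence,'' yet property $i)$ is stated for paths, so strictly speaking your acyclicity check is needed there too; your write-up supplies exactly that justification. Conversely, the paper's whole-path lifting is symmetric in the two paths and verifies $i)$ directly, which matches the form in which reducedness is defined, at the cost of hiding the walk-versus-path issue.
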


\begin{proof} Suppose $\vec{G}$ satisfies $i)$ and let $\gamma=\{v=v_0,...,v_n=w\}$, $\gamma'=\{v=w_0,...,w_m=w\}$ be any pair of paths in $T[\vec{G}]$. Therefore, $v_i$ is reachable from $v_{i-1}$ in $\vec{G}$ for every $1\leq i \leq n$ and $w_i$ is reachable from $w_{i-1}$ in $\vec{G}$ for every $1\leq i \leq m$. Thus, there exist a sequence $\gamma_0$ in $\vec{G}$ such that $\gamma\leq \gamma_0$ and a sequence $\gamma_0'$ in $\vec{G}$ such that $\gamma'\leq \gamma_0'$. By property $i)$, there is a directed path from $v$ to $w$ such that $\gamma_0,\gamma_0'\leq \gamma_0''$. Therefore, $\gamma,\gamma'\leq \gamma_0''$ and $T[\vec{G}]$ satisfies $i)$.
\end{proof}

Then, from propositions  \ref{Prop: strongly-reduced}, \ref{Prop: light-reduced}, \ref{Prop: equiv} and \ref{Prop: closure},

\begin{cor}\label{Cor: closure} If a DAG $\vec{G}$ is reduced, then the transitive closure $T[\vec{G}]$ is extremely reduced and strongly reduced. In particular, if $\vec{G}$ is extremely reduced or strongly reduced, then $T[\vec{G}]$ is extremely reduced and strongly reduced.
\end{cor}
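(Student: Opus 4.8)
The plan is to deduce the corollary purely by chaining the earlier propositions, with the key observation being that the transitive closure $T[\vec{G}]$ is simultaneously transitive and reduced, which forces all three reducedness notions to coincide on it.

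First I would handle the main clause, assuming $\vec{G}$ is reduced. By Proposition \ref{Prop: closure}, the transitive closure $T[\vec{G}]$ is again reduced. By Proposition \ref{Prop: transitive}, $T[\vec{G}]$ is transitive. Thus $T[\vec{G}]$ meets the standing hypothesis of Proposition \ref{Prop: equiv}, and since it is reduced, that proposition immediately yields that $T[\vec{G}]$ is also strongly reduced and extremely reduced.

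For the second clause (the ``in particular''), I would reduce both cases to the hypothesis of the main clause. If $\vec{G}$ is extremely reduced, then Proposition \ref{Prop: light-reduced} gives that it is strongly reduced, and Proposition \ref{Prop: strongly-reduced} then gives that it is reduced. If $\vec{G}$ is strongly reduced, Proposition \ref{Prop: strongly-reduced} directly gives that it is reduced. In either case $\vec{G}$ is reduced, so the main clause applies and $T[\vec{G}]$ is extremely reduced and strongly reduced.

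The argument is a pure assembly of prior results, so there is no genuinely hard step; the only point requiring minor care is checking that the finiteness assumptions carried by Propositions \ref{Prop: strongly-reduced} and \ref{Prop: equiv} remain available, which they do, since $T[\vec{G}]$ shares the (finite) vertex set of $\vec{G}$. The conceptual crux, and the reason the chain closes up, is that passing to the transitive closure supplies exactly the transitivity needed to invoke Proposition \ref{Prop: equiv}, thereby upgrading the single property ``reduced'' to the two stronger notions.
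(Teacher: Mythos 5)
Your proposal is correct and matches the paper's reasoning: the paper derives this corollary exactly by combining Propositions \ref{Prop: strongly-reduced}, \ref{Prop: light-reduced}, \ref{Prop: equiv} and \ref{Prop: closure}, i.e., preservation of reducedness under transitive closure plus the equivalence of the three notions on transitive DAGs, with the implication chain extremely reduced $\Rightarrow$ strongly reduced $\Rightarrow$ reduced handling the ``in particular'' clause. Your explicit appeal to Proposition \ref{Prop: transitive} to justify invoking Proposition \ref{Prop: equiv} is just a spelled-out version of the same argument.
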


Let us recall that 

\begin{equation}
T(n,\ell,1)=\binom{n}{2}-\binom{n-\ell+1}{2}=(n-\ell+1)(\ell-1)+\frac{(\ell-1)(\ell-2)}{2}
\end{equation}

As it was proved in \cite{MMO}, 

\begin{lema} \label{restaT} For $n\geq \ell$ and $d \geq 1$,
$$T(n+d,\ell,1) - T(n,\ell,1) = d(\ell-1).$$
\end{lema}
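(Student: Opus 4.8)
The plan is to prove the identity by direct substitution into the closed-form expression for $T(n,\ell,1)$ displayed just above the statement, so neither combinatorics nor induction is needed. I would work with the fully factored form $T(n,\ell,1) = (n-\ell+1)(\ell-1) + \frac{(\ell-1)(\ell-2)}{2}$, because this form isolates the entire dependence on $n$ in a single linear term.

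First I would substitute $n+d$ for $n$, obtaining $T(n+d,\ell,1) = (n+d-\ell+1)(\ell-1) + \frac{(\ell-1)(\ell-2)}{2}$. The crucial observation is that the quadratic term $\frac{(\ell-1)(\ell-2)}{2}$ is independent of $n$ and therefore cancels when I subtract $T(n,\ell,1)$. What remains is $\bigl[(n+d-\ell+1)-(n-\ell+1)\bigr](\ell-1)$, and the bracket collapses to $d$, yielding $d(\ell-1)$ at once.

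As an independent sanity check, I could instead use the binomial form $T(n,\ell,1)=\binom{n}{2}-\binom{n-\ell+1}{2}$ together with the telescoping identity $\binom{m+d}{2}-\binom{m}{2}=dm+\binom{d}{2}$. Applying this first with $m=n$ and then with $m=n-\ell+1$, the two $\binom{d}{2}$ contributions cancel and one is left with $dn - d(n-\ell+1) = d(\ell-1)$, the same answer, which confirms the computation.

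Honestly, there is no real obstacle here: the statement is a routine algebraic identity, and the only point worth flagging is the role of the hypothesis $n\geq\ell$ (with $d\geq 1$), which guarantees that all binomial coefficients involved are evaluated at nonnegative arguments so that the closed form for $T(n,\ell,1)$ is valid to begin with; the cancellation itself holds as a polynomial identity in $n$ and $d$ regardless.
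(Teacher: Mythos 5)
Your proof is correct. Note that the paper itself offers no internal proof of this lemma: it is stated as a result ``proved in \cite{MMO},'' so there is nothing in the text to compare against beyond the closed form displayed just above the statement. Your argument supplies exactly the verification that citation elides: substituting $n+d$ into $T(n,\ell,1)=(n-\ell+1)(\ell-1)+\frac{(\ell-1)(\ell-2)}{2}$, the $n$-independent term $\frac{(\ell-1)(\ell-2)}{2}$ cancels in the difference and the linear term contributes $\bigl[(n+d-\ell+1)-(n-\ell+1)\bigr](\ell-1)=d(\ell-1)$, as claimed. Your second computation via $\binom{m+d}{2}-\binom{m}{2}=dm+\binom{d}{2}$ (applied at $m=n$ and $m=n-\ell+1$, with the $\binom{d}{2}$ terms cancelling) is also valid and, pleasantly, it is the same telescoping identity that underlies the companion Lemma \ref{restat} for $t(n,d)$, so it makes the parallel between the two lemmas explicit. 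Your closing remark is also sound: the identity holds as a polynomial identity in $n$ and $d$, and the hypotheses $n\geq\ell$, $d\geq 1$ serve only to keep the binomial coefficients in the definition of $T(n,\ell,1)$ evaluated at nonnegative arguments, i.e.\ to keep the combinatorial meaning of the formula intact.
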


In particular, $T(n+2,\ell,1) - T(n,\ell,1) = 2(\ell-1)$.

Also, from \cite{MMO},

\begin{lema}\label{restat} For $1 \leq d \leq n$, 
$$t(n+d,d) - t(n,d) = (d-1)n +  \binom{d}{2}$$
\end{lema}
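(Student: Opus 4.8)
The plan is to exploit the combinatorial structure of the balanced Tur\'an partition and reduce the claim to an elementary algebraic identity, rather than manipulating the closed form of $t(n,d)$ directly.

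First I would record the edge count of a complete $d$-partite graph in terms of its part sizes. If $T(n,d)$ has parts of sizes $a_1,\dots,a_d$ (so $\sum_{i} a_i = n$), then since the only absent edges are those joining two vertices in a common part,
$$t(n,d) = \binom{n}{2} - \sum_{i=1}^{d}\binom{a_i}{2} = \sum_{1\le i<j\le d} a_i a_j.$$
The key observation, which I expect to carry the real content of the argument, is that passing from $n$ to $n+d$ vertices while keeping the number of parts equal to $d$ increases every part size by exactly one. Writing $n = qd + r$ with $0\le r<d$, the Tur\'an partition assigns size $q+1$ to $r$ of the parts and size $q$ to the remaining $d-r$ parts. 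Then $n+d = (q+1)d + r$ has the \emph{same} remainder $r$, so $T(n+d,d)$ assigns size $q+2$ to those same $r$ parts and size $q+1$ to the others; in both cases the multiset of sizes for one graph is obtained from the other by adding $1$ to each entry. Hence if $a_1,\dots,a_d$ are the sizes realizing $T(n,d)$, then $a_1+1,\dots,a_d+1$ realize $T(n+d,d)$.

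Finally I would expand the difference directly. Using the formula above,
$$t(n+d,d)-t(n,d) = \sum_{1\le i<j\le d}\bigl((a_i+1)(a_j+1)-a_i a_j\bigr) = \sum_{1\le i<j\le d}(a_i+a_j+1).$$
Each index $k$ occurs in exactly $d-1$ of the pairs $\{i,j\}$, so $\sum_{i<j}(a_i+a_j) = (d-1)\sum_{k} a_k = (d-1)n$, while $\sum_{i<j} 1 = \binom{d}{2}$. Adding these two contributions gives $(d-1)n + \binom{d}{2}$, which is the desired identity. The only genuine obstacle here is the uniform-growth claim on the part sizes; once that is established, the rest is a routine expansion, and note that the hypothesis $d\le n$ guarantees $q\ge 1$ so that all $d$ parts are nonempty and the partition is genuinely $d$-partite.
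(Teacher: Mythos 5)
Your proof is correct. Note, however, that the paper itself does not prove this lemma: it is quoted verbatim from the reference \cite{MMO}, so there is no in-paper argument to compare against. Your self-contained proof is sound on its own merits: the identity $t(n,d)=\sum_{1\le i<j\le d}a_ia_j$ for a complete multipartite graph is standard, the uniform-growth observation (writing $n=qd+r$, the balanced partition of $n+d$ into $d$ parts adds exactly one vertex to each part, since the remainder $r$ is unchanged) is exactly the right structural fact, and the expansion $\sum_{i<j}(a_i+a_j+1)=(d-1)n+\binom{d}{2}$ is a routine count in which each index appears in $d-1$ pairs. Your closing remark that $d\le n$ forces $q\ge 1$, so that the partition has $d$ genuinely nonempty parts, correctly addresses the only degenerate case the hypothesis is there to exclude.
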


In particular, $t(n+2,2) - t(n,2) = n +  1$.

\begin{teorema}\label{Theorem: bound} Let $\vec{G}=(\mathcal{V},\vec{\mathcal{E}})$ be DAG with $n$ vertices and such that the longest directed path has length $\ell\geq 1$. If $\vec{G}$ is extremely reduced, then $\vec{G}$ has at most $t(n-\ell+1,2)+T(n,\ell,1)$ edges. 
\end{teorema}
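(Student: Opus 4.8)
The plan is to reduce to the transitive case and then run a strong induction on $n$. Passing to the transitive closure $T[\vec{G}]$ only adds edges, and by Corollary~\ref{Cor: closure} it is again extremely reduced; moreover reachability is unchanged, and hence so is the length $\ell$ of the longest directed path (it equals the length of the longest chain of the reachability order, computed in either graph). So it suffices to bound the edges of a transitive extremely reduced DAG, which I regard as the comparability relation of a partial order with levels $V_0,\dots,V_\ell$, an edge being present exactly between comparable vertices. Abbreviating $f(n,\ell):=t(n-\ell+1,2)+T(n,\ell,1)$, Lemmas~\ref{restaT} and \ref{restat} (together with the explicit formula for $T(n,\ell,1)$) give the two identities I will balance against: $f(n,\ell)-f(n-2,\ell)=n+\ell-2$ and $f(n,\ell)-f(n-1,\ell-1)=n-1$.

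For the base case $\ell=1$ the only comparable pairs join $V_0$ to $V_1$, both antichains, so the comparability graph is bipartite and has at most $t(n,2)=f(n,1)$ edges. For the step I first dispose of the degenerate configurations. If there is a unique minimal element $\hat 0$ it lies below everything, so $\deg(\hat 0)=n-1$; deleting it leaves an extremely reduced transitive DAG on $n-1$ vertices with longest path $\ell-1$, and the identity $f(n,\ell)-f(n-1,\ell-1)=n-1$ together with the induction hypothesis closes this case. A unique maximal element is symmetric, and any isolated vertex may simply be removed (this keeps $\ell$ and $f(n-1,\ell)\le f(n,\ell)$).

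Otherwise $\vec{G}$ has at least two minimal and at least two maximal vertices and no isolated vertex. Fixing a maximum chain $v_0<\dots<v_\ell$, I would choose a minimal vertex $u\neq v_0$ and a maximal vertex $w\neq v_\ell$; these are distinct (a vertex that is both minimal and maximal would be isolated) and lie off the chain, so deleting them preserves both $\ell$ and the extremely reduced property, and the induction hypothesis bounds the remaining graph by $f(n-2,\ell)$. Everything then comes down to showing that deleting $u$ and $w$ removes at most $n+\ell-2$ edges, since $f(n,\ell)-f(n-2,\ell)=n+\ell-2$. Writing $U(u)=\{x\mid u<x\}$ and $D(w)=\{x\mid x<w\}$, extremality gives $\deg(u)=|U(u)|$ and $\deg(w)=|D(w)|$, and the number of deleted edges is $|U(u)|+|D(w)|$ when $u,w$ are incomparable and $|U(u)|+|D(w)|-1$ when $u<w$ (the edge $\{u,w\}$ being counted twice).

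The incomparable case is purely order-theoretic: then $U(u)\cap D(w)=\{x\mid u<x<w\}=\emptyset$ and $u,w\notin U(u)\cup D(w)$, so $|U(u)|+|D(w)|\le n-2$. The comparable case $u<w$ is where the hypothesis is essential and is the step I expect to be the main obstacle: the open interval $I:=\{x\mid u<x<w\}$ must be a chain, because two incomparable elements of $I$ would share the common ancestor $u$ and the common descendant $w$, contradicting extreme reducedness; hence $|I|\le\ell-1$, since $u<I<w$ is then a chain of length $|I|+1\le\ell$. As $U(u)\cup D(w)\subseteq\mathcal{V}$ and $U(u)\cap D(w)=I$, this yields $|U(u)|+|D(w)|=|U(u)\cup D(w)|+|I|\le n+(\ell-1)$, so $|U(u)|+|D(w)|-1\le n+\ell-2$. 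With both cases bounded by $n+\ell-2$, the induction closes.
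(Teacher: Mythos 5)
Your proof is correct, and the arithmetic it rests on checks out: $f(n,\ell)-f(n-2,\ell)=n+\ell-2$ and $f(n,\ell)-f(n-1,\ell-1)=n-1$ do follow from Lemmas \ref{restaT} and \ref{restat} together with the explicit formula for $T(n,\ell,1)$. It shares the paper's overall skeleton --- pass to the transitive closure via Corollary \ref{Cor: closure}, then induct on $n$ by deleting one source-type and one sink-type vertex --- but the execution is genuinely different at the two places that matter. First, the paper deletes an arbitrary source $v\in V_0$ and an arbitrary sink $w$ and counts level by level: since each $V_i$ is independent, extreme reducedness lets at most one vertex of $V_i$ be joined to both $v$ and $w$, giving at most $r_i+1$ deleted edges per level; you instead count globally, via $|U(u)|+|D(w)|=|U(u)\cup D(w)|+|U(u)\cap D(w)|$, with extreme reducedness forcing the open interval $U(u)\cap D(w)$ to be a chain, hence of size at most $\ell-1$. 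These are the same use of the hypothesis in substance, packaged differently. Second, and more importantly, you choose $u,w$ to be minimal/maximal vertices lying \emph{off} a fixed maximum chain (with separate treatment of the degenerate cases: unique minimal element, unique maximal element, isolated vertex). This guarantees that the graph left after deletion still has longest path length exactly $\ell$, so the induction hypothesis applies verbatim. The paper's proof deletes a source and a sink that may lie on every maximum path, so its appeal to the induction hypothesis for $G\setminus\{v,w\}$ tacitly needs either that $\ell$ does not drop or that the bound is monotone in $\ell$ --- a point it never addresses (monotonicity does hold, but must be checked). Your case analysis, paid for by the extra identity $f(n,\ell)-f(n-1,\ell-1)=n-1$, closes that gap; it also lets you run a parity-free induction with the single clean base case $\ell=1$ (bipartite, at most $t(n,2)=f(n,1)$ edges) instead of the paper's two base cases $n=\ell+1$ and $n=\ell+2$. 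In short: same strategy, but a tighter and more careful implementation than the paper's own.
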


\begin{proof} Let us prove the result by induction on $n$. Suppose that the longest directed path has length $\ell$. 

First, let us see that the result is true for $n=\ell+1$ and $n=\ell+2$.

If $n=\ell+1$ and there is a directed path of length $\ell$ then $\vec{G}$ has at most $\frac{\ell(\ell+1)}{2}=\frac{(\ell-2)(\ell-1)}{2}+2(\ell-1)+1=T(n,\ell,1)+t(n-\ell+1,2)$ edges. 

If $n=\ell+2$ and there is a directed path of length $\ell$ then there are $\ell+1$ vertices which define a directed path $\gamma=\{v_0,...,v_\ell\}$ and one vertex $w$ such that neither $\{w,v_0\}$ nor $\{v_\ell,w\}$ is a directed edge. Then, the partition $P_\Gamma=\{V_0,...,V_\ell\}$ of $\vec{G}$ satisfies that $v_i\in V_i$ for every $0\leq i \leq \ell$. Also, $w\in V_j$ for some $0\leq j \leq \ell$ and $\{w,v_j\}$, $\{v_j,w\}$ are not directed edges. Hence, $deg(w)\leq \ell$. 
Therefore, $\vec{G}$ has at most $\frac{\ell(\ell+1)}{2}+\ell=\frac{(\ell-2)(\ell-1)}{2}+3(\ell-1)+2=T(n,\ell,1)+t(n-\ell+1,2)$ edges. 

Suppose the induction hypothesis holds when the graph has $n$ vertices and let $\#(\mathcal{V})=n+2$. Also, by Proposition \ref{Prop: closure} we may assume that the graph is transitive. 

Consider the partition $P_\Gamma=\{V_0,...,V_\ell\}$ of $\mathcal{V}$. Let $\#(V_i)=r_i$. Let $v\in V_0$ and $w$ be any sink of $\vec{G}$. 
Consider any pair of vertices $v_i,v'_i\in V_i$. Since $\vec{G}$ is extremely reduced and every two vertices in $V_i$ are non-adjacent, $v_i,v'_i$ can not be both descendants from $v$ and ancestors for $w$ simultaneously. Hence, the number of edges joining the sets $\{v,w\}$ and $V_i$ are at most $r_i+1$. Therefore, there are at most $n+\ell-1$ edges joining $\{v,w\}$ and $G\backslash \{v,w\}$

Since $G\backslash \{v,w\}$ has $n$ vertices, by hypothesis, it contains at most $t(n-\ell+1,2)+T(n,\ell,1)$ edges.

Finally, there is at most 1 edge in the subgraph induced by $\{v,w\}$.

Therefore, by lemmas \ref{restaT} and \ref{restat}, $\#(\vec{E}(G))\leq t(n-\ell+1,2)+T(n,\ell,1)+n+\ell=t(n-\ell+3,2)+T(n+2,\ell,1)$.
\end{proof}

By Corollary \ref{Cor: closure} we know that the extremal graph for reduced and strongly reduced graphs is transitive. Thus, from Theorem \ref{Theorem: bound} and Proposition \ref{Prop: equiv} we obtain the following. 

\begin{cor} Let $\vec{G}=(\mathcal{V},\vec{\mathcal{E}})$ be DAG with $n$ vertices and such that the longest directed path has length $\ell\geq 1$. If $\vec{G}$ is reduced, then $\vec{G}$ has at most $t(n-\ell+1,2)+T(n,\ell,1)$ edges. 
\end{cor}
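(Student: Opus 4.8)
The plan is to reduce the \emph{reduced} case to the \emph{extremely reduced} case already settled in Theorem \ref{Theorem: bound}, by passing to the transitive closure $T[\vec{G}]$ and exploiting the fact that $\vec{G}$ is a spanning subgraph of it. Since $\vec{G}$ is reduced, Corollary \ref{Cor: closure} (equivalently, combining Proposition \ref{Prop: closure}, Proposition \ref{Prop: transitive} and Proposition \ref{Prop: equiv}) shows that $T[\vec{G}]$ is extremely reduced. By construction $T[\vec{G}]$ has the same vertex set $\mathcal{V}$ as $\vec{G}$ and an edge set containing $\vec{\mathcal{E}}$, so every edge of $\vec{G}$ is an edge of $T[\vec{G}]$. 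Consequently it suffices to bound the number of edges of $T[\vec{G}]$ and then invoke monotonicity of the edge count under the subgraph relation.

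The one point that genuinely needs checking is that the length of the longest directed path is not altered by taking the transitive closure; only then does Theorem \ref{Theorem: bound}, applied to $T[\vec{G}]$, deliver exactly the bound $t(n-\ell+1,2)+T(n,\ell,1)$ claimed for $\vec{G}$. I would verify this through the reachability relation. An edge of $T[\vec{G}]$ is, by definition, either an original edge of $\vec{G}$ or an added pair $\{v,w\}$ with $w$ reachable from $v$; in either case a directed path $v_0,\dots,v_k$ in $T[\vec{G}]$ is precisely a sequence in which each $v_i$ is reachable from $v_{i-1}$ in $\vec{G}$. Conversely, given such a sequence, concatenating the witnessing directed paths $v_{i-1}\to v_i$ in $\vec{G}$ produces a directed walk from $v_0$ to $v_k$; because $\vec{G}$ is acyclic this walk can repeat no vertex (a repeated vertex would yield a closed directed walk, hence a directed cycle), so it is a directed path of length at least $k$ in $\vec{G}$. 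Hence the longest directed path of $T[\vec{G}]$ has length at most that of $\vec{G}$, and since adding edges can only lengthen paths the reverse inequality is immediate; the common value is $\ell$.

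With the longest-path length preserved and $T[\vec{G}]$ extremely reduced on the same $n$ vertices, Theorem \ref{Theorem: bound} gives that $T[\vec{G}]$ has at most $t(n-\ell+1,2)+T(n,\ell,1)$ edges. As $\vec{\mathcal{E}}\subseteq \vec{\mathcal{E}}'$, the number of edges of $\vec{G}$ does not exceed this, which is exactly the desired bound.

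The main obstacle — really the only subtle step — is this invariance of the longest-path length under transitive closure; everything else is the already-established chain of implications among the three reducedness notions together with the elementary observation that passing to $T[\vec{G}]$ only adds edges. An alternative route, closer to the paper's own phrasing, would be to argue directly that an edge-extremal reduced DAG must be transitive and then identify \emph{reduced} with \emph{extremely reduced} via Proposition \ref{Prop: equiv}; but routing through $T[\vec{G}]$ has the advantage of avoiding any explicit characterization of the extremizer, since the subgraph inequality does all the work.
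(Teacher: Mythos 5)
Your proof is correct and takes essentially the same route as the paper: pass to the transitive closure $T[\vec{G}]$, note via Corollary \ref{Cor: closure} (i.e.\ Propositions \ref{Prop: transitive}, \ref{Prop: closure} and \ref{Prop: equiv}) that it is extremely reduced, apply Theorem \ref{Theorem: bound} to it, and use that $\vec{G}$ is a spanning subgraph of $T[\vec{G}]$. The paper's own one-line proof leaves implicit the point you check carefully --- that transitive closure preserves the length of the longest directed path, since a path in $T[\vec{G}]$ concatenates to a walk, hence a path, in the acyclic graph $\vec{G}$ --- so your write-up is, if anything, more complete.
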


\begin{cor} Let $\vec{G}=(\mathcal{V},\vec{\mathcal{E}})$ be DAG with $n$ vertices and such that the longest directed path has length $\ell\geq 1$. If $\vec{G}$ is strongly reduced, then $\vec{G}$ has at most $t(n-\ell+1,2)+T(n,\ell,1)$ edges. 
\end{cor}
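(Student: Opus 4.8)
The plan is to reduce this statement to Theorem \ref{Theorem: bound} by passing to the transitive closure, on which the three notions of reducedness coincide. First I would observe that if $\vec{G}$ is strongly reduced then, by Proposition \ref{Prop: strongly-reduced}, it is reduced, so Corollary \ref{Cor: closure} guarantees that its transitive closure $T[\vec{G}]=(\mathcal{V},\vec{\mathcal{E}}')$ is extremely reduced (equivalently, $T[\vec{G}]$ is transitive by Proposition \ref{Prop: transitive}, whence Proposition \ref{Prop: equiv} applies). Since the transitive closure only adds edges, $\vec{\mathcal{E}}\subseteq \vec{\mathcal{E}}'$, and hence $|\vec{\mathcal{E}}|\leq |\vec{\mathcal{E}}'|$; thus it suffices to bound the number of edges of $T[\vec{G}]$.

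To apply Theorem \ref{Theorem: bound} to $T[\vec{G}]$ I must check that the two relevant parameters are unchanged. The vertex set is literally the same, so $T[\vec{G}]$ has $n$ vertices, and I claim that the longest directed path of $T[\vec{G}]$ still has length $\ell$. On the one hand, every directed path of $\vec{G}$ is also a directed path of $T[\vec{G}]$, so the latter has a path of length $\ell$. On the other hand, given a directed path $\{u_0,\dots,u_k\}$ in $T[\vec{G}]$, each edge $\{u_{i-1},u_i\}\in\vec{\mathcal{E}}'$ is realized by a directed path from $u_{i-1}$ to $u_i$ in $\vec{G}$; concatenating these produces a directed walk in $\vec{G}$ containing $u_0,\dots,u_k$, and since $\vec{G}$ has no directed cycle this walk repeats no vertex and is therefore a directed path of length at least $k$. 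Hence $k\leq \ell$, and the longest path of $T[\vec{G}]$ has length exactly $\ell$.

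With these invariants in hand, Theorem \ref{Theorem: bound} applies to the extremely reduced DAG $T[\vec{G}]$ on $n$ vertices whose longest directed path has length $\ell$, giving $|\vec{\mathcal{E}}'|\leq t(n-\ell+1,2)+T(n,\ell,1)$. Combining this with $|\vec{\mathcal{E}}|\leq|\vec{\mathcal{E}}'|$ yields the desired bound.

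I expect the only genuine step to be the verification that the transitive closure preserves the length of the longest directed path; everything else is the observation that closing up can only increase the edge count while keeping the graph extremely reduced. The acyclicity of $\vec{G}$ is exactly what prevents the concatenated walk from repeating a vertex and thereby spuriously lengthening a path, so that is where the DAG hypothesis does the real work.
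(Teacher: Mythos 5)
Your proof is correct and follows essentially the same route as the paper: pass from $\vec{G}$ to its transitive closure, note via Proposition \ref{Prop: strongly-reduced} and Corollary \ref{Cor: closure} that the closure is extremely reduced, and apply Theorem \ref{Theorem: bound}, since taking the closure only adds edges. You are in fact more careful than the paper, whose one-line justification never explicitly checks that the transitive closure preserves the length $\ell$ of the longest directed path --- the verification you rightly single out as the one genuine step.
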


\section{Directed intersection graphs of boxes}

\begin{definicion} Let $\mathcal{R}$ be a collection of boxes with parallel axis in $\br^2$. Let $\vec{G}=(\mathcal{V},\vec{\mathcal{E}})$ be a directed graph such that $\mathcal{V}=\mathcal{R}$ and given $R,R'\in \mathcal{R}$ with $R=I\times J$, $R'=I'\times J'$ then 
$\{R,R'\}\in \vec{\mathcal{E}}$ if and only if $I\subset I'$ and $J'\subset J$ (i.e. there is an edge if and only if the intersection is \textit{transverse} and the order is defined by the subset relation in the first coordinate). Let us call $\vec{G}$ the \textbf{directed intersection graph} of $\mathcal{R}$.
\end{definicion}

\begin{figure}[ht]
\centering
\includegraphics[scale=0.3]{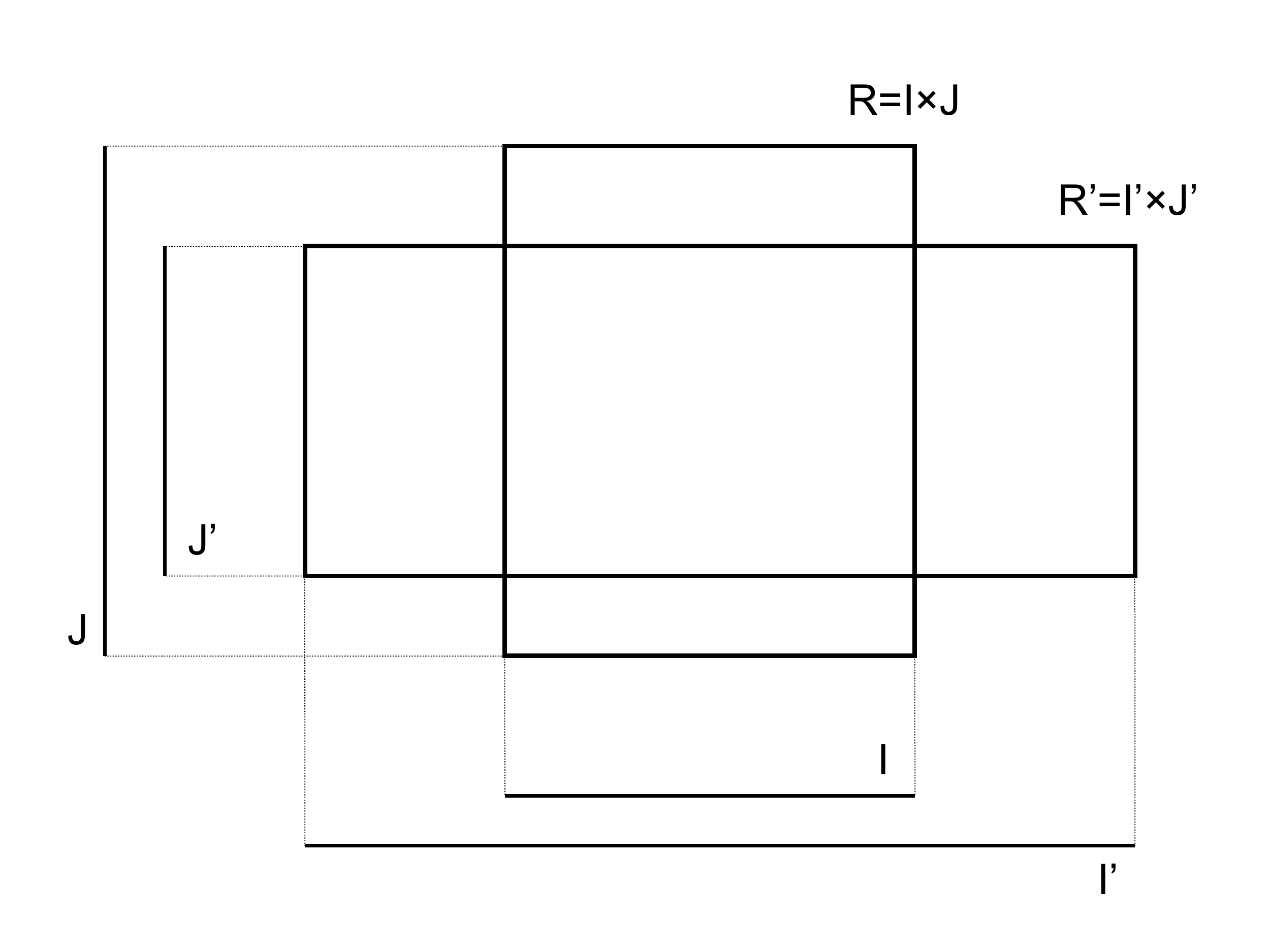}
\caption{The transverse intersection above induces a directed edge $\{R,R'\}$.}
\end{figure}

\begin{definicion} Let $\mathcal{R}$ be a collection of boxes with parallel axis in $\br^2$. We say that $\mathcal{R}$ is a collection  with \textbf{transverse intersection} if for every pair of boxes either they are disjoint or their intersection is transverse.
\end{definicion}

\begin{prop}\label{Prop: intersect} Let $\mathcal{R}$ be a collection of boxes with parallel axis in $\br^2$  and $\vec{G}$ be the induced directed intersection graph. If two vertices $v$, $w$ have both a common ancestor and a common descendant in $\vec{G}$, then the corresponding boxes $R_v,R_w$ intersect. 
\end{prop}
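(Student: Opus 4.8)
The plan is to reduce everything to the elementary criterion that two axis-parallel boxes $R_v=I_v\times J_v$ and $R_w=I_w\times J_w$ in $\br^2$ intersect \emph{if and only if} both coordinate projections overlap, that is, $I_v\cap I_w\neq\emptyset$ and $J_v\cap J_w\neq\emptyset$. Thus it suffices to exhibit a point common to $I_v$ and $I_w$ on the first axis and a point common to $J_v$ and $J_w$ on the second axis. The crucial idea is that these two facts come from two different hypotheses: the common ancestor will be responsible for the first coordinate, and the common descendant for the second.

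First I would record how the edge relation propagates along directed paths. By definition, an edge from $I\times J$ to $I'\times J'$ forces $I\subseteq I'$ and $J'\subseteq J$. Since containment is transitive, along any directed path the first-coordinate intervals form an increasing nested chain and the second-coordinate intervals form a decreasing nested chain. Consequently, if $u$ is an ancestor of $x$, i.e. there is a directed path from $u$ to $x$, then $I_u\subseteq I_x$ and $J_x\subseteq J_u$; dually, if $x$ is a descendant of $u$ the same containments apply.

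Now let $R_a=I_a\times J_a$ be a common ancestor of $v$ and $w$, and $R_d=I_d\times J_d$ a common descendant. Applying the previous observation to the directed paths from $a$ to $v$ and from $a$ to $w$ yields $I_a\subseteq I_v$ and $I_a\subseteq I_w$; as $I_a$ is a nonempty interval, $I_a\subseteq I_v\cap I_w$ shows that the first projections overlap. Symmetrically, applying it to the paths from $v$ to $d$ and from $w$ to $d$ gives $J_d\subseteq J_v$ and $J_d\subseteq J_w$, so the nonempty interval $J_d$ lies in $J_v\cap J_w$ and the second projections overlap. Combining these with the box-intersection criterion gives $R_v\cap R_w\neq\emptyset$, as required.

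There is no real obstacle here; the argument is essentially bookkeeping. The only point requiring care is keeping the two coordinate directions straight: a common ancestor alone forces overlap of the first-coordinate intervals but says nothing about the second, and dually a common descendant controls only the second coordinate, so both hypotheses are genuinely used and each feeds exactly one coordinate. I would also make explicit that the coordinate intervals of a box are nonempty, since that is precisely what upgrades the containments $I_a\subseteq I_v\cap I_w$ and $J_d\subseteq J_v\cap J_w$ into actual common points.
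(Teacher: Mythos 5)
Your proof is correct and follows essentially the same argument as the paper: the common ancestor's first-coordinate interval is nested inside both $I_v$ and $I_w$, and the common descendant's second-coordinate interval inside both $J_v$ and $J_w$. The only cosmetic difference is that the paper concludes directly by exhibiting the rectangle $I_a\times J_b\subset R_v\cap R_w$, whereas you phrase the final step via the projection-overlap criterion for axis-parallel boxes.
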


\begin{proof} Let $a$ be a common ancestor and $R_a=I_a\times J_a$ be the corresponding box. Let $b$ be a common descendant and $R_b=I_b\times J_b$ be the corresponding box. Then if $R_v=I_v\times J_v$, $R_w=I_w\times J_w$ are the boxes corresponding to $v$ and $w$ respectively, it follows by construction that $I_a\subset I_v,I_w$ and $J_b\subset J_v,J_w$. Therefore, $I_a\times J_b\subset R_v,R_w$ and $R_v\cap R_w\neq \emptyset$.
\end{proof}

\begin{prop}\label{Prop: common} If $\mathcal{R}$ is a collection of boxes with parallel axis in $\br^2$ with transverse intersection, then the induced directed intersection graph $G$ is extremely reduced and transitive. 
\end{prop}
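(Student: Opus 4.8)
The plan is to treat the two assertions separately, since each follows directly from the definition of the directed intersection graph, with the second relying on the already-established Proposition \ref{Prop: intersect}.

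For transitivity I would argue straight from the edge condition. Suppose $\{R,R'\},\{R',R''\}\in\vec{\mathcal{E}}$, writing $R=I\times J$, $R'=I'\times J'$, $R''=I''\times J''$. The defining condition for the two edges gives $I\subset I'$, $J'\subset J$ and $I'\subset I''$, $J''\subset J'$. Chaining the containments coordinate by coordinate yields $I\subset I'\subset I''$ and $J''\subset J'\subset J$, that is $I\subset I''$ and $J''\subset J$, which is exactly the condition for $\{R,R''\}\in\vec{\mathcal{E}}$. Thus transitivity is immediate from the transitivity of the subset relation and does not even require the transverse hypothesis.

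For the \emph{extremely reduced} property I would prove the contrapositive form: any pair of vertices sharing both a common ancestor and a common descendant is adjacent. Let $x,y$ be such a pair, with boxes $R_x=I_x\times J_x$ and $R_y=I_y\times J_y$. By Proposition \ref{Prop: intersect} the hypothesis on common ancestor and descendant forces $R_x\cap R_y\neq\emptyset$. Now I invoke the assumption that $\mathcal{R}$ has transverse intersection: since $R_x$ and $R_y$ are not disjoint, their intersection must be transverse. Unpacking the definition of transverse intersection, this means either $I_x\subset I_y$ and $J_y\subset J_x$, or $I_y\subset I_x$ and $J_x\subset J_y$. In the first case $\{x,y\}\in\vec{\mathcal{E}}$ and in the second $\{y,x\}\in\vec{\mathcal{E}}$, so in either case $x$ and $y$ are adjacent. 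Taking the contrapositive gives precisely the definition of extremely reduced: two non-adjacent vertices with a common ancestor cannot have a common descendant.

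The main (and essentially only) obstacle is correctly translating the geometric notion of transverse intersection into the two algebraic containment alternatives that define the directed edges; this is the step where the hypothesis of the proposition actually enters. Once that bridge is made explicit, Proposition \ref{Prop: intersect} supplies the nonempty intersection from the common ancestor/descendant data, and the remainder is a direct reading of the definitions, so I expect no further technical difficulty.
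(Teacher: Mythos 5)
Your proof is correct, and your argument for the extremely reduced property is essentially the paper's own: both combine Proposition \ref{Prop: intersect} with the transverse-intersection hypothesis on $\mathcal{R}$, merely stated in opposite contrapositive directions (the paper goes from non-adjacency to disjointness to no common ancestor/descendant; you go from common ancestor/descendant to intersection to adjacency). If anything, your write-up is more complete than the paper's, since the paper's proof silently omits the verification of transitivity, which you carry out explicitly by chaining the containments $I\subset I'\subset I''$ and $J''\subset J'\subset J$.
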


\begin{proof} Let $v,w$ be two vertices such that there is no edge joining them. This means, by construction, that their corresponding boxes do not have a transverse intersection. Since $\mathcal{R}$ has transverse intersection, this implies that  these boxes do not intersect. Thus, by Proposition \ref{Prop: intersect}, if $v,w$ have a common ancestor, then they can not have a common descendant.
\end{proof}

\begin{obs}
Consider the bipartite graph $G$ from Figure \ref{contraexample} with the partition given by  \{letters, numbers\} and assume all directed edges go from letters into numbers. Note that $G$ is extremely reduced, transitive and acyclic.
It is not difficult to observe that the induced subgraph given by the set of vertices $\{1,2,3,4,8,9, A,B,C,D,H,I\}$ is realizable as boxes in $\br^2$, or what is equivalent in this case, by intervals in the plane conforming two sets of disjoint squares, one given by $A,B,1,2$ and the other by $3,4,C,D$, one strictly inside the other.  Then by the same observation applied to the induced subgraphs given by the set of vertices $\{1,2,5,6,A,B,E,F,7,12,G,L\}$ and the set of vertices $\{3,4,5,6,C,D,E,F,10,11,J,K\}$  it is forced necessarily a system of tree squares one inside the other. However, intervals given by $\{ 7,8,9,10,11,12\} $ and $\{ G,H,I,J,K,L\}$ are forced to have more intersections that those given by the graph.
In other words, there is no family of boxes (or intervals) that realizes such a graph or for which it is induced the graph $G$.  Then, the converse of Proposition \ref{Prop: common} is not true.  
\end{obs}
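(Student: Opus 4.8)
The plan is to prove the two halves of the claim separately: first, that the graph $G$ of Figure \ref{contraexample} genuinely satisfies the conclusion-side hypotheses of Proposition \ref{Prop: common} (it is acyclic, transitive and extremely reduced), and second, that no family of boxes with transverse intersection can induce $G$, so that the converse of that proposition fails. The first half I expect to be routine. Since $G$ is bipartite with every edge oriented from a letter to a number, it contains no directed path of length two; hence it has no directed cycle (so it is a DAG), transitivity holds vacuously, and no vertex has both an in-edge and an out-edge, so no vertex simultaneously possesses an ancestor and a descendant. Consequently a non-adjacent pair $x,y$ sharing a common ancestor would force both $x$ and $y$ to be sinks, which then cannot share a common descendant; the defining condition of \emph{extremely reduced} therefore holds trivially. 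This isolates the real content in the second, non-realizability, half.

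For the non-realizability half I would argue by contradiction. Suppose a collection $\mathcal{R}$ of boxes with transverse intersection induces $G$, and write $R_v=I_v\times J_v$ for the box at vertex $v$. By the definition of the directed intersection graph, a (letter $\to$ number) edge $\{v,w\}$ forces $I_v\subset I_w$ and $J_w\subset J_v$, whereas a \emph{non}-edge between two vertices forces, under the transverse-intersection hypothesis and exactly as in the proof of Proposition \ref{Prop: common}, that the boxes be disjoint, i.e. $I_v\cap I_w=\emptyset$ or $J_v\cap J_w=\emptyset$. The strategy is to read off from the three induced subgraphs named in the statement the containment and disjointness relations they impose on the $x$-intervals $\{I_v\}$ and, separately, on the $y$-intervals $\{J_v\}$, and then to show that these relations are jointly unsatisfiable.

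Concretely, I would first treat the induced subgraph on $\{1,2,3,4,8,9,A,B,C,D,H,I\}$: its edge pattern forces the two blocks $\{A,B,1,2\}$ and $\{3,4,C,D\}$ to be realized as two families of pairwise disjoint squares, one family nested strictly inside the region cut out by the other, which pins down a linear nesting order on the relevant $x$- and $y$-intervals. Feeding the shared vertices $1,2,3,4,5,6,A,B,C,D,E,F$ into the other two named subgraphs then propagates this nesting into a forced system of three squares, one inside the next. Finally I would track the consequences for the intervals attached to $\{7,8,9,10,11,12\}$ and $\{G,H,I,J,K,L\}$: the accumulated containments place two of these boxes in transverse position. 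Since the corresponding pair is a non-edge of $G$, and a transverse overlap in $\mathcal{R}$ must produce an edge, this contradicts the assumption that $\mathcal{R}$ induces $G$.

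The main obstacle is precisely this last bookkeeping step: extracting from the figure the exact partial orders induced on the $x$-intervals and on the $y$-intervals, and verifying that their combination is inconsistent with the prescribed set of non-edges. I would organize it by maintaining two separate interval-order diagrams, one per coordinate, recording for each named subgraph the comparabilities it forces, and then exhibiting the single pair among $\{7,\dots,12\}$ and $\{G,\dots,L\}$ whose forced $x$-containment and $y$-containment together yield a transverse intersection that $G$ does not prescribe. Everything else, including the easy first half, should follow mechanically once these two coordinatewise orders are written down.
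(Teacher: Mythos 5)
Your proposal follows essentially the same route as the paper's own (informal) argument: a routine verification that $G$ is an acyclic, transitive, extremely reduced DAG, followed by a non-realizability argument by contradiction that uses the very same three induced subgraphs to force a nested system of squares, and then concludes that the intervals attached to $\{7,\dots,12\}$ and $\{G,\dots,L\}$ would acquire transverse intersections not prescribed by $G$. The paper's remark carries out this final interval bookkeeping at the same level of detail as your plan (it merely asserts it), so your proposal is a faithful, correct match for the paper's proof.
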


\begin{figure}
\centering
\includegraphics[scale=0.3]{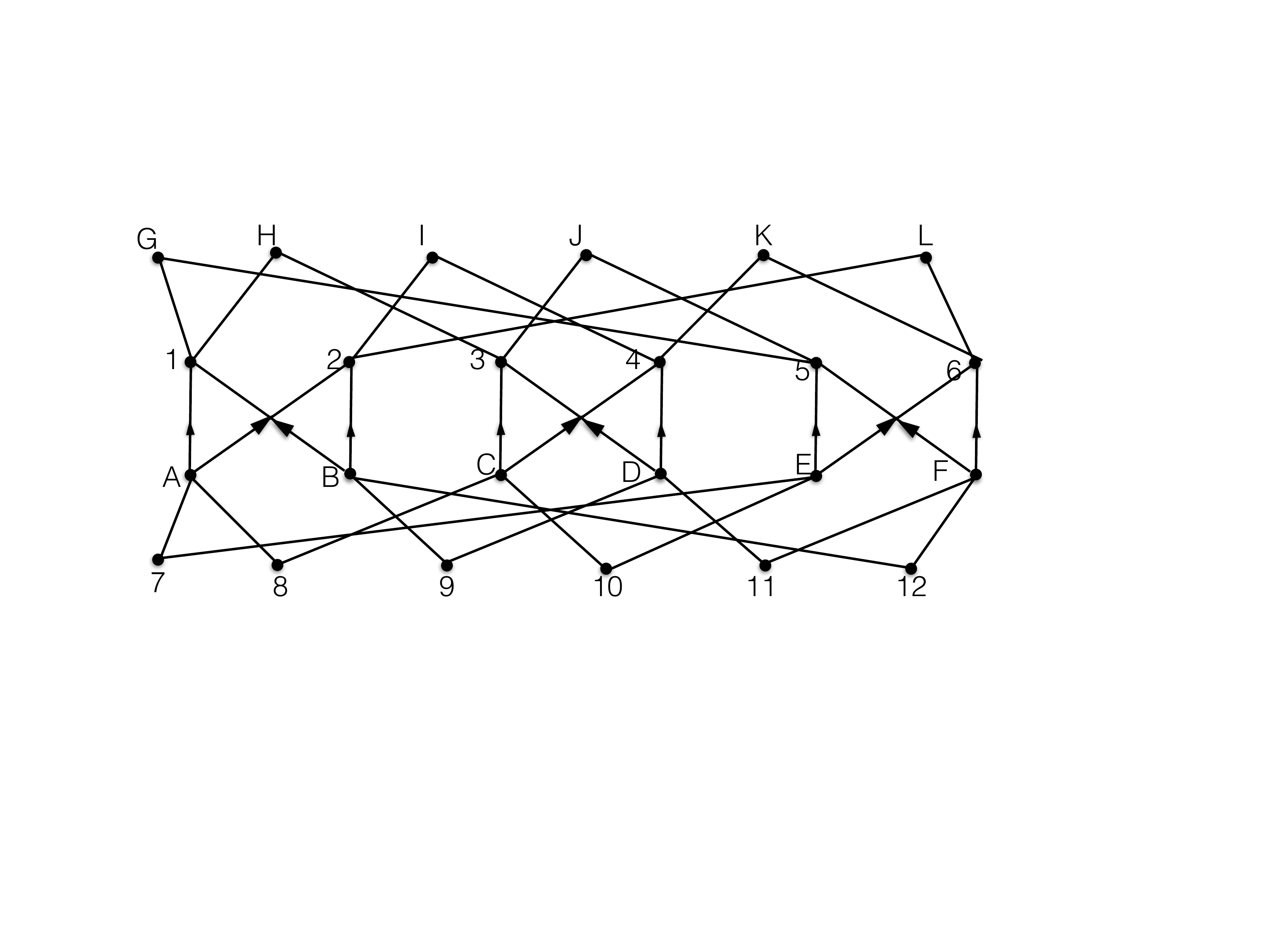}
\caption{The bipartite transitive and extremely reduced DAG, $G$ with partition given by  \{letters, numbers\} and edges directed from letters into numbers. That is not realizable as a family of boxes in $\br^2$} 
\label{contraexample}
\end{figure}

\begin{figure}[ht]
\centering
\includegraphics[scale=0.4]{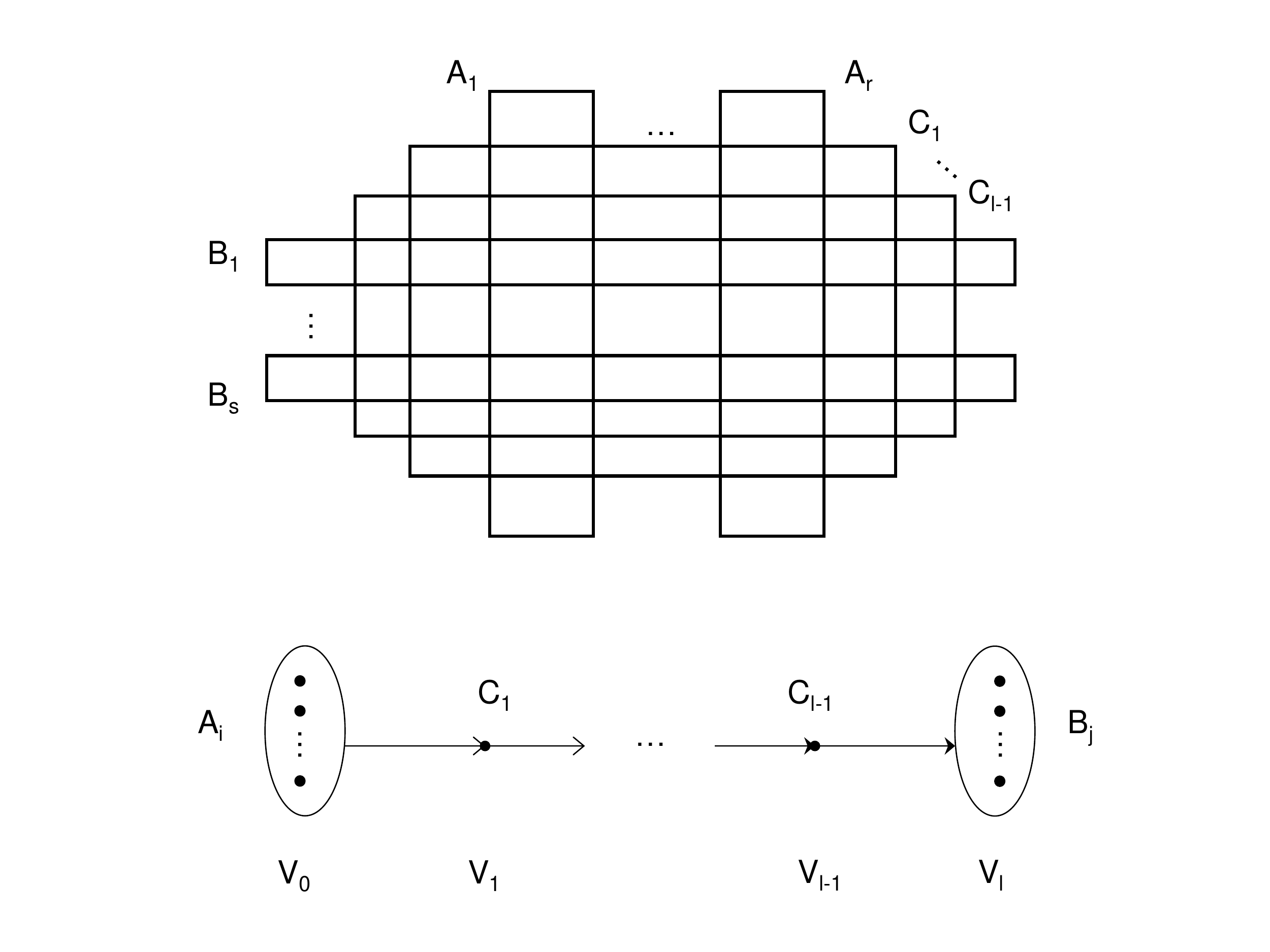}
\caption{The graph $G[r,l,s]$ corresponds to the directed intersection graph of the collection in the figure where $x_i\sim A_i$, $y_j\sim C_j$ and $z_k\sim B_k$. Notice that the graph is transitive although not every edge is represented in the figure.}
\label{Figure: extremal}
\end{figure}

Let $G[r,l,s]$ be the graph, $G(\mathcal{V},\vec{\mathcal{E}})$, such that: 

$\mathcal{V}=\{x_1,...,x_r,y_1,...,y_{l-1},z_1,...,z_s\}$

$\{x_i,x_j\}\notin \vec{\mathcal{E}}$ for any $i\neq j$,

$\{z_i,z_j\}\notin \vec{\mathcal{E}}$ for any $i\neq j$,

$\{x_i,y_j\}\in \vec{\mathcal{E}}$ for every $i, j$,

$\{y_i,y_j\}\in \vec{\mathcal{E}}$ for every $i< j$,

$\{y_i,z_j\}\in \vec{\mathcal{E}}$ for every $i, j$,

$\{x_i,z_j\}\in \vec{\mathcal{E}}$ for every $i, j$.

This is the directed intersection graph from the collection of boxes in Figure \ref{Figure: extremal}.

By Proposition \ref{Prop: common}, $G[r,l,s]$ is a transitive extremely reduced DAG. In particular, $G[r,l,s]$ is strongly reduced and reduced.

Now, to prove that the bound obtained in Theorem \ref{Theorem: bound} and its corollaries is best possible, it is immediate to check the following:

\begin{prop} If $n-\ell$ is even, $G[\frac{n-\ell}{2},\ell,\frac{n-\ell}{2}]$ has $t(n-\ell+1,2)+T(n,\ell,1)$ edges. If $n-\ell$ is odd, 
$G[\frac{n-\ell+1}{2},\ell,\frac{n-\ell-1}{2}]$ has $t(n-\ell+1,2)+T(n,\ell,1)$ edges.
\end{prop}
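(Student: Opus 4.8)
The plan is to treat the statement as a direct computation. Since every adjacency of $G[r,l,s]$ is prescribed, I would simply count $|\vec{\mathcal{E}}|$ exactly, put it in closed form, and then substitute the parameters of each parity case and compare with $t(n-\ell+1,2)+T(n,\ell,1)$, using the explicit formula for $T(n,\ell,1)$ recalled above together with Lemmas \ref{restaT} and \ref{restat}. First I would tally the edges by their prescribed types. The sets $\{x_1,\dots,x_r\}$ and $\{z_1,\dots,z_s\}$ are independent, so they contribute nothing internally; the complete bipartite links $x_i\to y_j$, $y_i\to z_j$ and $x_i\to z_j$ contribute $r(l-1)$, $(l-1)s$ and $rs$ edges respectively; and the transitive tournament on $\{y_1,\dots,y_{l-1}\}$ contributes $\binom{l-1}{2}$. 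Hence $|\vec{\mathcal{E}}|=(l-1)(r+s)+\binom{l-1}{2}+rs$.

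The key observation is that this closed form already splits into the two pieces of the target bound. Writing $N:=r+(l-1)+s$ for the number of vertices of $G[r,l,s]$, and noting that a direct check gives longest directed path of length $\ell=l$ (realized by $x_i\to y_1\to\cdots\to y_{l-1}\to z_j$), we have $r+s=N-\ell+1$, so by the displayed formula for $T(n,\ell,1)$ the first two summands combine as $(\ell-1)(N-\ell+1)+\binom{\ell-1}{2}=T(N,\ell,1)$. Thus $|\vec{\mathcal{E}}|=T(N,\ell,1)+rs$, and the whole problem reduces to identifying $rs$ with the balanced complete bipartite count $t(\,\cdot\,,2)$ on the appropriate number of vertices. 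This is the conceptual heart of the argument, and it mirrors the structure $t(n-\ell+1,2)+T(n,\ell,1)$ of the bound in Theorem \ref{Theorem: bound}: the tournament-plus-stars part is exactly a maximal one-dimensional (interval) Tur\'an configuration, while the $X$–$Z$ bipartite part is a balanced two-partite Tur\'an graph.

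Finally I would substitute the two parity cases and verify the arithmetic. In each case the prescribed $r$ and $s$ are chosen as balanced as possible for their sum, so that $rs=\lfloor\cdot\rfloor\lceil\cdot\rceil$ equals $t(\,\cdot\,,2)$, while the tournament-plus-stars part supplies the matching $T(\,\cdot\,,\ell,1)$ term. The main place to be careful—and the only real obstacle—is the floor/ceiling and vertex-count bookkeeping: one must check that $r+s$ and $N$ line up exactly with the arguments $n-\ell+1$ and $n$ appearing in $t(n-\ell+1,2)$ and $T(n,\ell,1)$, and that in each parity ($n-\ell$ even versus odd) the extra vertex falls on the correct side so that the balanced product is precisely the intended Tur\'an number. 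Lemmas \ref{restaT} and \ref{restat}, which record how $T(\,\cdot\,,\ell,1)$ and $t(\,\cdot\,,2)$ change under small shifts of their first argument, are exactly the tool for reconciling these shifted arguments, after which the identity $|\vec{\mathcal{E}}|=T(N,\ell,1)+rs$ closes the computation.
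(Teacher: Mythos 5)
Your overall plan---count the edges of $G[r,l,s]$ in closed form and compare with $t(n-\ell+1,2)+T(n,\ell,1)$---is the right one; indeed it is all the paper itself offers, since the proposition is introduced there with ``it is immediate to check'' and no proof is written out. Your intermediate work is correct: $G[r,l,s]$ has $N=r+(l-1)+s$ vertices, its longest directed path has length $l$, and
\[
|\vec{\mathcal{E}}|=(l-1)(r+s)+\binom{l-1}{2}+rs=T(N,l,1)+rs,
\]
using $r+s=N-l+1$ and the displayed formula for $T(\cdot,\ell,1)$.

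The gap is that you defer the ``floor/ceiling and vertex-count bookkeeping'' to the end and assert that it closes the computation, when in fact it is exactly the step that fails for the statement as printed. In both parity cases the prescribed parameters satisfy $r+s=n-\ell$, not $n-\ell+1$: the graph has $N=n-1$ vertices, and since $\frac{n-\ell}{2}\cdot\frac{n-\ell}{2}$ (even case) and $\frac{n-\ell+1}{2}\cdot\frac{n-\ell-1}{2}$ (odd case) are the balanced products for sum $n-\ell$, your identity yields $|\vec{\mathcal{E}}|=T(n-1,\ell,1)+t(n-\ell,2)$. By Lemma \ref{restaT} (with $d=1$) and the formula $t(m,2)=\lfloor m^2/4\rfloor$, this falls short of the claimed value $T(n,\ell,1)+t(n-\ell+1,2)$ by exactly $(\ell-1)+\lceil\frac{n-\ell}{2}\rceil$, which is positive whenever $n>\ell\geq 1$. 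A concrete check: for $n=3$, $\ell=1$ the stated graph $G[1,1,1]$ is a single directed edge, while $t(3,2)+T(3,1,1)=2$. So the substitution you promise cannot succeed; what your computation actually shows is that the proposition's parameters are off by one, and the graphs attaining the bound of Theorem \ref{Theorem: bound} on $n$ vertices are those with $r+s=n-\ell+1$ as balanced as possible, namely $G[\frac{n-\ell}{2}+1,\ell,\frac{n-\ell}{2}]$ when $n-\ell$ is even and $G[\frac{n-\ell+1}{2},\ell,\frac{n-\ell+1}{2}]$ when $n-\ell$ is odd. Your write-up needs either to use these corrected parameters or to record the discrepancy explicitly; as it stands, the one step you did not carry out is the one that breaks.
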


\section{Acknowledgements}
The first author was partially supported by MTM 2012-30719. The second and third authors wish to acknowledge support from PAPIIT IN105915 and
CONACyT 166306.

\end{document}